\definecolor{black}{rgb}{0.0, 0.0, 0.0}
\definecolor{red}{rgb}{1.0, 0.5, 0.5}
\newcommand{\margnote}[1]{
\ifthenelse{\boolean{shownotes}}%
{\marginpar{\raggedright\tiny\texttt{#1}}}%
{}%
}
\newcommand{\hole}[1]{
\ifthenelse{\boolean{shownotes}}%
{\begin{center} \fbox{ \rule {.25cm}{0cm} \rule[-.1cm]{0cm}{.4cm}
\parbox{.85\textwidth}{\begin{center} \texttt{#1}\end{center}} \rule
{.25cm}{0cm}}\end{center}} {} }
\title[Compressible Euler equations with a nonlocal dissipation]{The global Cauchy problem for compressible Euler equations with a nonlocal dissipation}
\author[Choi]{Young-Pil Choi}
\address[Young-Pil Choi]{\newline Department of Mathematics and Institute of Applied Mathematics\newline
Inha University, Incheon 402-751, Republic of Korea}
\email{ypchoi@inha.ac.kr}
\numberwithin{equation}{section}
\newtheorem{theorem}{Theorem}[section]
\newtheorem{lemma}{Lemma}[section]
\newtheorem{proposition}{Proposition}[section]
\newtheorem{remark}{Remark}[section]
\newcommand{\R}{\mathbb R}
\newcommand{\ls}{\lesssim}
\newcommand{\T}{\mathbb T}
\newcommand{\mc}{\mathcal C}
\newcommand{\bq}{\begin{equation}}
\newcommand{\eq}{\end{equation}}
\newcommand{\e}{\varepsilon}
\newcommand{\lt}{\left}
\newcommand{\rt}{\right}
\newcommand{\pa}{\partial}
\def\charf {\mbox{{\text 1}\kern-.30em {\text l}}}
\begin{document}
%%%%%%%%%%%%%%%%
\allowdisplaybreaks

\date{\today}

\subjclass[]{}
\keywords{Global existence, Euler alignment system, nonlocal dissipation, large-time behavior.}

%\thanks{\textbf{Acknowledgments.}
%}

\begin{abstract}This paper studies the global existence and uniqueness of strong solutions and its large-time behavior for the compressible isothermal Euler equations with a nonlocal dissipation. The system is rigorously derived from the kinetic Cucker-Smale flocking equation with strong local alignment forces and diffusions through the hydrodynamic limit based on the relative entropy argument. In a perturbation framework, we establish the global existence of a unique strong solution for the system under suitable smallness and regularity assumptions on the initial data. We also provide the large-time behavior of solutions showing the fluid density and the velocity converge to its averages exponentially fast as time goes to infinity. 
\end{abstract}

\maketitle \centerline{\date}

%\tableofcontents

%%%%%%%%%%%%%%%%%%%%%%%%%%%%%%%%%%%%%%%%%%%%%%%%%%%%%%%%%%%%%%%%%%%%%%%%%%%%%%%%%5
%
%
%                        Section: Introduction
%
%
%%%%%%%%%%%%%%%%%%%%%%%%%%%%%%%%%%%%%%%%%%%%%%%%%%%%%%%%%%%%%%%%%%%%%%%%%%%%%%%%%
\section{Introduction}
In this paper, we are interested in the global Cauchy problem for compressible Euler equations with a nonlocal dissipation in the periodic domain $\T^d$ with $d \geq 1$. More precisely, we study the global existence of a unique strong solution and the large-time behavior for the following compressible isothermal Euler equations with a nonlocal dissipation:
\begin{align}\label{CEN}
\begin{aligned}
&\partial_t \rho + \nabla_x \cdot (\rho u) = 0, \quad x \in \T^d, \quad t >0,\cr
&\partial_t (\rho u) + \nabla_x \cdot (\rho u \otimes u) + \nabla_x \rho = -\rho\int_{\T^d} \psi(x-y)(u(x) - u(y))\rho(y) \,dy,
\end{aligned}
\end{align}
with initial data 
\bq\label{ini-CEN}
(\rho,u)|_{t=0} =: (\rho_0,u_0), \quad x \in \T^d.
\eq
Here $\rho$ and $u$ are the density and velocity of the flow, respectively, and $\psi$ represents a communication weight, which gives the local averaging measuring the alignment in velocity among individuals. Throughout this paper, we assume that  $\psi$ satisfies
\bq\label{ass_psi}
\psi \in W^{s+1,\infty}(\T^d) \mbox{ for some } s \in \R_+, \quad \psi(x) = \psi(-x), \quad \mbox{and} \quad \psi \geq \psi_m > 0.
\eq
We also may assume, without loss of generality, that $\rho$ is a probability density function, i.e., $\|\rho(\cdot,t)\|_{L^1(\T^d)} = 1$ for $t\geq 0$ since the total mass is conserved in time. Note that the system \eqref{CEN} reduces to the standard isothermal Euler system provided $\psi \equiv 0$.

In the context of multi-agents interactions, the system \eqref{CEN} arises as macroscopic descriptions for the following Newton type microscopic model for interacting many-body system exhibiting a flocking phenomenon \cite{CFRT10,CS07,HL09,HT08}:
\begin{align}\label{CS}
\begin{aligned}
\frac{d x_i(t)}{dt} &= v_i(t), \quad i = 1,\cdots, N, \quad t > 0,\cr
\frac{d v_i(t)}{dt} &= \frac1N \sum_{j=1}^N \psi(x_i(t) - x_j(t))(v_j(t) - v_i(t)).
\end{aligned}
\end{align}
From the particle system \eqref{CS}, we can derive a kinetic equation, mesoscopic descriptions, by using BBGKY hierarchies or mean-field limits \cite{CCR11,HL09,HT08} when the number of individuals goes to infinity, i.e., $N \to \infty$. To be more precise, the mesoscopic observables for the system \eqref{CS} can be estimated from the velocity moments of the density function $f$, which solves the following Vlasov type equation:
\bq\label{Vlasov}
\pa_t f + v \cdot \nabla_x f + \nabla_v \cdot \lt(F[f]f\rt) = 0, 
\eq
where $F[f] := F[f](x,v,t)$ is the velocity alignment force field given by
\[
F[f] = \int \psi(x-y)(w-v)f(y,w,t)\,dydw.
\]
The existence of weak and strong solutions, and the rigorous derivation of the kinetic equation \eqref{Vlasov} are well studied in \cite{CCR11,HL09,HT08}. We also refer to \cite{BCC11,CCH14,CCH14_2,CCHSapp,CS18} for the local/glocal-in-time existence theories and the mean-field limit for the swarming models with singular interaction kernels.

At the formal level, in order to derive the hydrodynamic equations of the form \eqref{CEN}, we can take into account the moments on the kinetic equations:
\[
\rho = \int f\,dv \quad \mbox{and} \quad \rho u = \int vf\,dv.
\]
Then we can easily check that the local density $\rho$ and velocity $u$ satisfy
\begin{align}\label{hydro}
\begin{aligned}
&\pa_t \rho + \nabla_x \cdot (\rho u) = 0,\cr
&\pa_t (\rho u) + \nabla_x \cdot (\rho u \otimes u) + \nabla_x \cdot \lt(\int (v-u)\otimes (v-u)f(x,v,t)\,dv\rt) \cr
&\hspace{2.3cm} =-\rho\int \psi(x-y)(u(x) - u(y))\rho(y) \,dy.
\end{aligned}
\end{align}
Note that the momentum equations in \eqref{hydro} are not closed, and up to now, suitable closure conditions for \eqref{hydro} are not known. However, to close the above system, we can formally consider the mono-kinetic ansatz for $f$:
\bq\label{mono}
f(x,v,t) \simeq \rho \delta_{v - u(x,t)}
\eq
or the local Maxwell type ansatz for $f$:
\bq\label{maxwell}
f(x,v,t) \simeq \rho(x,t) \exp\lt(-\frac{|v - u(x,t)|^2}{2} \rt).
\eq
These formal assumptions on $f$ give the Euler alignment system; the system \eqref{hydro} is reduced to the system \eqref{CEN} without pressure by assuming the mono-kinetic ansatz \eqref{mono}, the local Maxwell type ansatz \eqref{maxwell} gives our main system \eqref{CEN}.  Very recently, it is obtained that the rigorous derivations of the pressureless Euler alignment system from the equation \eqref{Vlasov} in \cite{FKpre}. More specifically, by considering the strong local alignment forces, for instance $(1/\e)\nabla_v \cdot \lt((v - u)f\rt)$ on the right side of the equation \eqref{Vlasov} and studying asymptotic limit limit $\e \to 0$, the convergence $f_\e(x,v,t) \rightharpoonup \rho(x,t) \delta_{v - u(x,t)}$ in the sense of distributions is found in \cite{FKpre}. The derivation of our main system \eqref{CEN} is established in \cite{KMT15} by taking into account the strong local alignment forces and diffusions. We give more details on that in Section \ref{sec_rigo} since it is very closely related to our current work. We refer to the recent reviews \cite{CCP17,CHL17} for the detailed descriptions of the modeling and related literature.

There are several works on the pressureless Euler alignment system; the global regularity of classical solutions is obtained in the Eulerian formulation \cite{HKK14} and in the Lagrangian formulation \cite{CHpre,HKK15}, and critical thresholds between the supercritical regions with finite-time breakdown and the subcritical region with global-in-time regularity of classical solutions are investigated in one dimension \cite{CCTT16,CCZ16,TT14}. More recently, the global regularity for the pressureless fractional Euler alignment system is also established in \cite{DKRTpre,KTpre,ST17,STapp}. Despite those fruitful developments on the existence theory and blow-up analysis for the pressureless Euler type system, to the best knowledge of the author, the global existence and the large-time behavior of strong solutions of the system \eqref{CEN}, i.e., the isothermal Euler alignment system, have not been studied so far. Note that the local-in-time existence and uniqueness of strong solutions are founded in \cite{KMT15}, and the global regularity based on the estimate of Riemann invariants is obtained in \cite{CCTT16} for the system \eqref{CEN} with the constant communication weights, $\psi \equiv 1$ for instance. It is worth mentioning that the presence of pressure destroys the characteristic structure, thus more careful analysis is required.

The main objective of this paper is to establish the global existence and the large-time behavior of solutions to the system \eqref{CEN}. The main difficulty is of course in obtaining an appropriate dissipative effect from the interaction term on the right hand side of the momentum equation in \eqref{CEN} and ruling out the possibility of the formation of singularities in a finite time. Note that it is well known that solutions of compressible Euler equations even with smooth initial conditions can develop a shock in a finite time, see \cite{Chen05} and the references therein for general survey of the Euler equations. Inspired by recent works \cite{Choi16,CK16}, where two-phase fluid models are studied, together with our careful analysis on the nonlocal interaction term, we show that the finite time blow up of strong solutions can be prevented under suitable smallness and regular assumptions on the initial data. We also want to emphasize that our strategy is a bit different from the one proposed in \cite{STW} for the compressible Euler equations with a linear damping. We do not estimate the time derivative of solutions in the desired Sobolev space. Instead of that, we consider a type of crossing term, which clearly gives the dissipation rate for the density and makes the estimates simpler, see Lemma \ref{lem:aux2} for more details. For the large-time behavior estimate, we employ a Lyapunov function approach based on $L^2$-estimate due to the presence of pressure. For the pressureless Euler alignment system, i.e., the system \eqref{CEN} without pressure, the large-time behavior of solutions is well studied in \cite{CHL17,CHpre,TT14} by using the method of characteristics based on $L^\infty$-estimate. More precisely, if there is no pressure in \eqref{CEN}, we can show that
\[
\sup_{x,y \in supp(\rho(\cdot,t))}|u(x,t) - u(y,t)| \to 0 \quad \mbox{as} \quad t \to \infty,
\]
under certain assumptions on the initial data and the weight function $\psi$. However, as mentioned above, it is more delicate to trace the dynamics of the system \eqref{CEN} along the characteristics due to the presence of pressure. This is the main reason why we employ the Lyapunov function approach to the estimate of large-time behavior of solutions. Furthermore, unlike the pressureless Euler alignment system, see \cite{HKK14,HKK15} for instance, a fluctuated energy $\int_{\T^d} \rho |u - m_c|^2\,dx$ with $m_c := \int_{\T^d} \rho u\,dx$ is not dissipative because of the pressure, thus further estimates to obtain the dissipation rate of the density are required. To handle this problem, we take the Bogovskii type estimate used in \cite{Choi16,Choi16_2,CK16} based on the elliptic regularity for Poisson's equation. This allows us to obtain a Gronwall's inequality for the proposed Lyapunov function, and we have the time-asymptotic velocity alignment behavior of solutions. Our strategy for the estimate of large-time behavior of solutions does not require any smallness assumption on the solutions, see Theorem \ref{thm:large} for details.

Here we introduce several notations used throughout the paper. For a function $u= u(x)$, $\|u\|_{L^p}$ denotes the usual $L^p(\T^d)$-norm. We also denote by $C$ a generic positive constant independent of $t$ and $f \ls g$ represents that there exists a positive constant $C>0$ such that $f \leq C g$ For simplicity, we often drop $x$-dependence of a differential operator $\nabla_x$, that is, $\nabla u := \nabla_x u$. For any nonnegative integer $k$, $H^k$ denote the $k$-th order $L^2$ Sobolev space. $\mc^k([0,T]:E)$ is the set of $k$-times continuously differentiable functions from an interval $[0,T] \subset \R$ into a Banach space $E$, and $L^p(0,T;E)$ is the set of the $L^p$ functions from an interval $(0,T)$ to a Banach space $E$. $\nabla^k$ denotes any partial derivative $\pa^\alpha$ with multi-index $\alpha$, $|\alpha| = k$.

\subsection{Rigorous derivation of \eqref{CEN} from a kinetic equation}\label{sec_rigo}
In \cite{KMT15}, the hydrodynamic limit of a kinetic Cucker-Smale flocking model with strong local alignment force and diffusion is investigated. More precisely, let $f(x,v,t)$ be the particle distribution function at $(x,v) \in \T^d \times \R^d$ at time $t$. Then the hydrodynamic limit $\e \to 0$ in the following kinetic equation is studied in \cite{KMT15}:
\bq\label{eq_kin}
\pa_t f^\e + v \cdot \nabla_x f^\e + \nabla_v \cdot (F[f^\e]f^\e) = \frac1\e \nabla_v \cdot ((v - u^\e)f^\e + \nabla_v f^\e),
\eq
where the alignment force $F[f^\e]$ and the local velocity $u^\e$ are given by
\[
F[f^\e](x,v,t) := \int_{\T^d \times \R^d} \psi(x-y)(w-v) f^\e(y,w,t)\,dydw \quad \mbox{and} \quad u^\e := \frac{\int_{\R^d} vf^\e\,dv}{\int_{\R^d} f^\e\,dv},
\]
respectively. At the formal level, we can easily find 
\[
f^\e \to f = \rho(x,t) e^{-\frac{|v-u(x,t)|^2}{2}} \quad \mbox{as} \quad \e \to 0,
\]
by setting $\e = 0$ in \eqref{eq_kin}, and this indicates that the limiting system should be our main system \eqref{CEN}. In \cite{KMT15}, by employing the relative entropy argument, the following inequality is obtained:
\begin{align}\label{ineq_entropy}
\begin{aligned}
& \int_{\T^d} \frac{\rho^\e}{2}|u^\e - u|^2\,dx + \int_{\T^d}\int_\rho^{\rho^\e} \frac{\rho^\e - z}{z} \,dzdx  \cr
&\qquad + \frac12\int_0^{T^*} \int_{\T^d \times \T^d} \psi(x-y)\rho^\e(x) \rho^\e(y)\lt((u^\e(x) - u(x)) - (u^\e(y) - u(y)) \rt)^2 dxdydt\cr
&\qquad \quad \leq C\sqrt{\e},
\end{aligned}
\end{align}
for well-prepared initial data, where $\rho^\e = \int_{\R^d} f^\e \,dv$, $\rho^\e u^\e = \int_{\R^d} vf^\e\,dv$, and $T^*>0$ is the maximal time for which there exists a strong solution to the system \eqref{CEN}. This gives the following strong converges of weak solutions of the equation \eqref{eq_kin} to the strong solutions of the isothermal Euler alignment system \eqref{CEN}: 
\begin{align}\label{strong_conv}
\begin{aligned}
&f^\e \to \rho e^{-\frac{|u-v|^2}{2}} \quad \mbox{in} \quad L^1_{loc}(0,T^*; L^1(\T^d \times \R^d)) \quad \mbox{as} \quad \e \to 0,\cr
&\rho^\e \to \rho, \quad \rho^\e u^\e \to \rho u, \quad \mbox{and} \quad \rho^\e |u^\e|^2 \to \rho u^2 \quad \mbox{in} \quad L^1_{loc}(0,T^*; L^1(\T^d)) \quad \mbox{as} \quad \e \to 0.
\end{aligned}
\end{align}
As mentioned above, the above hydrodynamic limit $\e \to 0$ holds as long as there exists a strong solution to the limiting system, which is our main system \eqref{CEN}, since the global existence of weak solutions is established in \cite{KMT13}. Note that the local-in-time unique strong solution is obtained in \cite{KMT15}, and it is not obvious to obtain the global regularity of strong solutions to the system \eqref{CEN} due to the nonlocal nonlinear external forces. Thus combining results in \cite{KMT13, KMT15} only yields that the inequality \eqref{ineq_entropy} holds for a short time $T^* > 0$, which is given as the above. 

\subsection{Main results} In this part, we state our main results on the global-in-time existence and uniqueness of strong solutions and its large-time behavior for the system \eqref{CEN}.  For the global-in-time regularity of solutions, we reformulate the system \eqref{CEN}, by setting $h: = \ln \rho$, as 
\begin{align}\label{main_eq2}
\begin{aligned}
&\pa_t h + \nabla h \cdot u + \nabla \cdot u = 0, \quad x \in \T^d, \quad t > 0,\cr
&\pa_t u + u \cdot \nabla u + \nabla h = - \int_{\T^d} \psi(x-y)(u(x) - u(y)) e^{h(y)}\,dy,
\end{aligned}
\end{align}
with the initial data
\bq\label{ini-main_eq2}
(h(x,t),u(x,t))|_{t=0} =: (h_0(x) = \ln\rho_0(x),u_0(x)), \quad x \in \T^d.
\eq
Then we present our first result on the existence theory for the reformulated system \eqref{main_eq2}.
\begin{theorem}\label{thm:ext}Let $s> d/2+1$. Suppose that the initial data $(h_0,u_0)$ satisfy
\[
(h_0,u_0) \in H^s(\T^d) \times H^s(\T^d),
\]
and the communication weight function $\psi$ satisfies \eqref{ass_psi}. If $\|(h_0,u_0)\|_{H^s} \leq \e_1$ with sufficiently small $\e_1>0$, then the Cauchy problem \eqref{main_eq2}-\eqref{ini-main_eq2} has a unique global strong solution $(h,u) \in \mc([0,\infty);H^s(\T^d)) \times \mc([0,\infty);H^s(\T^d))$.
\end{theorem}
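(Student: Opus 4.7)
The plan is to combine the local-in-time existence from \cite{KMT15} with a priori $H^s$ energy estimates based on a Lyapunov-type functional, and then close the argument by a standard continuation/bootstrap argument under the smallness hypothesis on the initial data.

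First I would establish the basic $H^s$ energy inequality. Applying $\pa^\alpha$ with $|\alpha| \leq s$ to both equations of \eqref{main_eq2}, multiplying by $\pa^\alpha h$ and $\pa^\alpha u$ respectively, and integrating over $\T^d$, the transport terms $u\cdot\nabla$ are handled by Moser-type commutator estimates and the Sobolev embedding $H^s(\T^d) \hookrightarrow W^{1,\infty}(\T^d)$, which is available since $s > d/2 + 1$. The cross pressure terms $\nabla h$ in the momentum equation and $\nabla\cdot u$ in the continuity equation cancel after a standard symmetrization. Using the symmetry $\psi(x) = \psi(-x)$ together with a change of variables, the nonlocal force contributes a nonnegative dissipation of the form
\[
\frac12 \int_{\T^d\times\T^d} \psi(x-y)\,e^{h(y)}\,|\pa^\alpha u(x) - \pa^\alpha u(y)|^2\,dxdy,
\]
up to commutator remainders that are lower order thanks to $\psi \in W^{s+1,\infty}$. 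This yields a bound $\tfrac{d}{dt}E(t) + D_u(t) \ls \sqrt{E(t)}\,(E(t) + D_u(t))$, where $E$ is the full $H^s$-norm squared of $(h,u)$ and $D_u$ controls only the velocity \emph{fluctuations} around the momentum-weighted mean.

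The principal obstacle is that $D_u$ provides no dissipation whatsoever for $h$; without a replacement, the $E^{3/2}$ right-hand side would permit finite-time blowup even at small size. To extract the missing density dissipation I would introduce a crossing term in the spirit of the approach mentioned in the introduction, namely
\[
J(t) := \sum_{|\alpha|\leq s-1} \int_{\T^d} \pa^\alpha u \cdot \nabla \pa^\alpha h \,dx.
\]
Differentiating $J$ in time and substituting both equations of \eqref{main_eq2}, the pressure term $\nabla h$ in the momentum equation contributes the favorable quantity $-\|\nabla h\|_{H^{s-1}}^2$; the convective terms can be integrated by parts against the continuity equation to produce $\|\nabla\cdot u\|_{H^{s-1}}^2$ (absorbable into $D_u$) together with cubic remainders of size $\sqrt{E}\,(E+D_u+\|\nabla h\|_{H^{s-1}}^2)$; and the nonlocal term contributes a piece controlled by $\sqrt{D_u}\,\|\nabla h\|_{H^{s-1}}$ which is absorbed by Young's inequality. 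The zero mode of $u$, which $D_u$ does not see, is handled using conservation of total mass and momentum.

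Finally I would form the Lyapunov functional $\mathcal{L}(t) := E(t) + \eta J(t)$ for $\eta>0$ sufficiently small so that $\mathcal{L}$ is equivalent to $E$, and derive a differential inequality of the form
\[
\frac{d}{dt}\mathcal{L}(t) + c\bigl(D_u(t) + \|\nabla h(t)\|_{H^{s-1}}^2\bigr) \ls \sqrt{\mathcal{L}(t)}\,\bigl(\mathcal{L}(t) + D_u(t) + \|\nabla h(t)\|_{H^{s-1}}^2\bigr).
\]
Under the smallness assumption $\|(h_0,u_0)\|_{H^s} \leq \e_1$, a continuity argument on the maximal interval of existence keeps $\sqrt{\mathcal{L}(t)}$ below the threshold at which the right-hand side is absorbed into the left, so $\mathcal{L}$ is bounded by a constant multiple of its initial value for all time; combined with the local existence of \cite{KMT15}, this gives the claimed global strong solution. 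The hard part is the careful bookkeeping of the cross-terms in $dJ/dt$ coming from the nonlocal force involving $e^{h(y)}$ and the convective derivative of $\nabla h$, and verifying that all resulting lower-order remainders can be absorbed; the uniform positivity $\psi \geq \psi_m > 0$ is what makes the nonlocal dissipation coercive enough for this closure to succeed.
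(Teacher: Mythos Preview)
Your overall architecture---local existence, an $H^s$ energy estimate augmented by a cross term $J$ to extract density dissipation, then continuation under smallness---is exactly the paper's. There is, however, a genuine gap in your handling of the nonlocal term.

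First, for $|\alpha|\ge 1$ the symmetrized dissipation you write does not occur: since $\pa_x^\alpha u(y)=0$, the leading Leibniz contribution is simply
\[
-\int_{\T^d\times\T^d}\psi(x-y)\,|\pa^\alpha u(x)|^2\,e^{h(y)}\,dy\,dx \;\le\; -\psi_m\|\pa^\alpha u\|_{L^2}^2,
\]
i.e.\ full dissipation of $\pa^\alpha u$, not merely a fluctuation. This is stronger than you claim, so by itself it is harmless; but it means the ``zero mode of $u$'' discussion is aimed at the wrong target.

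The real obstruction is the commutator remainder when some derivatives land on $\psi$. The Leibniz expansion produces terms bounded by
\[
\|\nabla^\ell\psi\|_{L^\infty}\,\|\nabla^{k-\ell}u\|_{L^2}\,\|\nabla^{k}u\|_{L^2},\qquad 1\le \ell\le k,
\]
which are \emph{quadratic} in $U$ with coefficients of order $\|\psi\|_{W^{s,\infty}}$, not cubic with a $\sqrt{E}$ factor. After Young's inequality you are left with $C\|u\|_{H^{k-1}}^2$ on the right-hand side with a large constant; summing over $k$, this cannot be absorbed into $\psi_m\sum_k\|\nabla^k u\|_{L^2}^2$, so the inequality $\tfrac{d}{dt}E+D_u\lesssim\sqrt{E}(E+D_u)$ that your closure relies on is false. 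The paper flags precisely this point (Remark~\ref{rmk:small}) and resolves it not by a single Lyapunov functional but by an \emph{induction on the derivative order} (Proposition~\ref{prop:en}): at level $k$ the offending term $C\|u\|_{H^{k-1}}^2$ is replaced by $C\|U_0\|_{H^{k-1}}^2$ via the induction hypothesis, and the resulting inequality $\tfrac{d}{dt}\mathcal{L}_k+c\,\mathcal{L}_k\le C\|U_0\|_{H^{k-1}}^2$ integrates to a uniform bound. The base case $k=0$ is the weighted energy identity (Lemma~\ref{lem_energy_hu}), and a third auxiliary estimate on $\|\nabla^{k-1}u\|_{L^2}^2$ (Lemma~\ref{lem:aux3}) is added alongside your cross term so that the level-$k$ functional is equivalent to $\|\nabla^k h\|_{L^2}^2+\|\nabla^{k-1}u\|_{H^1}^2$. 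Your single-shot functional $\mathcal{L}=E+\eta J$ would close for Euler with \emph{linear} damping, where all force remainders are genuinely cubic, but not for this nonlocal operator.
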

\begin{remark}From the structure of the system \eqref{main_eq2}, we can easily find 
\[
h,u \in \mc([0,\infty);H^s(\T^d)) \cap \mc^1([0,\infty);H^{s-1}(\T^d)).
\]
Moreover, it follows from Lemma \ref{lem_equiv1} in Section \ref{sec_ext} that Theorem \ref{thm:ext} implies the global-in-time existence and uniqueness of classical solutions to the system \eqref{CEN}.
\end{remark}
\begin{remark}As mentioned in Section \ref{sec_rigo}, our main system \eqref{CEN} can be rigorously derived from the kinetic equation \eqref{eq_kin} through the hydrodynamic limit $\e \to 0$ and this limit holds as long as there exists a unique strong solution to the system \eqref{CEN}. Thus, combining the global-in-time existence result in Theorem \ref{thm:ext} and the previous result on the hydrodynamic limit in \cite{KMT15} yields that the relative entropy inequality \eqref{ineq_entropy} holds for all time, and subsequently, the strong convergences \eqref{strong_conv} also hold for all time.
\end{remark}

In order to present our second result of the current work on the large-time behavior of classical solutions showing the velocity alignment behavior, we introduce a Lyapunov function measuring the fluctuation of momentum and mass from the corresponding averaged quantities:
\[
\mathcal{F}(t) := \int_{\T^d} \rho|u - m_c|^2 \,dx + \int_{\T^d} (\rho - 1)^2 \,dx \quad \mbox{where} \quad m_c(t) := \int_{\T^d} \rho u\,dx.
\]

\begin{theorem}\label{thm:large}Let $(\rho ,u)$ be any global classical solutions to the system \eqref{CEN}-\eqref{ini-CEN} satisfying $(\rho,u) \in L^\infty(\T^d \times \R_+)$. Suppose that the communication weight function $\psi$ satisfies \eqref{ass_psi} and the norm of initial momentum $m_c(0)$ is small enough. Then we have
\[
 \mathcal{F}_0 e^{-\tilde ct} \ls \mathcal{F}(t) \ls \mathcal{F}_0 e^{-ct}, \quad t \geq 0,
\]
where $c$ and $\tilde c$ are positive constants independent of $t$.
\end{theorem}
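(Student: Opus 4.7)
The plan is to construct a combined Lyapunov functional $\mathcal{L}(t)$ equivalent to $\mathcal{F}(t)$ which satisfies $\tfrac{d}{dt}\mathcal{L}(t)\leq -c\,\mathcal{L}(t)$, then apply Gronwall's inequality. First I would observe that integrating the momentum equation in \eqref{CEN} over $\T^d$, the pressure and convective fluxes vanish by periodicity while the nonlocal term vanishes by the symmetry $\psi(x-y)=\psi(y-x)$, so the total momentum $m_c(t)=m_c(0)$ is conserved. Writing $\tilde u := u - m_c$ and combining the momentum equation (tested against $\tilde u$) with the continuity equation, a standard symmetrization yields
\begin{equation*}
\frac{d}{dt}\int_{\T^d}\lt[\frac{\rho|\tilde u|^2}{2}+H(\rho)\rt]dx = -\frac12\int_{\T^d\times\T^d}\psi(x-y)\rho(x)\rho(y)|\tilde u(x)-\tilde u(y)|^2\,dxdy,
\end{equation*}
where $H(\rho):=\rho\ln\rho-\rho+1\geq 0$ is the relative entropy of the isothermal pressure. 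Under the standing hypothesis $(\rho,u)\in L^\infty$ together with the positive lower bound on $\rho$ inherited from the $h=\ln\rho$ reformulation of Theorem \ref{thm:ext}, one has $H(\rho)\sim(\rho-1)^2$, so the integrand on the left is comparable to $\mathcal{F}(t)$. Because $\int_{\T^d}\rho\tilde u\,dx=0$, expanding the double integral and using $\psi\geq\psi_m$ provides the lower bound $2\psi_m\int\rho|\tilde u|^2\,dx$ for the dissipation, so $\tilde E(t):=\int[\rho|\tilde u|^2/2+H(\rho)]dx$ obeys $\tfrac{d}{dt}\tilde E(t)\leq-2\psi_m\int\rho|\tilde u|^2\,dx$. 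This controls the velocity fluctuation but not the density one.

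To recover the missing dissipation for $\|\rho-1\|_{L^2}^2$, I would solve the Poisson problem $\Delta\phi=\rho-1$ on $\T^d$ with $\int_{\T^d}\phi\,dx=0$, which is well-posed since $\int(\rho-1)\,dx=0$ and enjoys the elliptic bounds $\|\nabla\phi\|_{L^2}+\|\nabla^2\phi\|_{L^2}\ls\|\rho-1\|_{L^2}$. Define the crossing functional $\mathcal{C}(t):=-\int_{\T^d}\rho u\cdot\nabla\phi\,dx$. Differentiating in $t$, substituting the momentum equation for $\pa_t(\rho u)$, and using $\Delta\phi=\rho-1$, the pressure contribution produces the principal term $\int\nabla\rho\cdot\nabla\phi = -\int\rho\Delta\phi = -\int(\rho-1)^2\,dx$, precisely the dissipation we want. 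The remaining terms to estimate are the convective one $\int\nabla\cdot(\rho u\otimes u)\cdot\nabla\phi$, the nonlocal alignment one $\int\rho L[u]\cdot\nabla\phi$ with $L[u](x)=\int\psi(x-y)(u(x)-u(y))\rho(y)\,dy$, and the time-derivative one $-\int\rho u\cdot\nabla\pa_t\phi$ coming from $\Delta\pa_t\phi=-\nabla\cdot(\rho u)$. For the last I would use Helmholtz projection to write $\nabla\pa_t\phi=-P_\nabla(\rho u)$ and the decomposition $\rho u=\rho\tilde u+(\rho-1)m_c+m_c$, together with $P_\nabla m_c=0$, to get $\|\nabla\pa_t\phi\|_{L^2}^2\ls\int\rho|\tilde u|^2\,dx+|m_c|^2\|\rho-1\|_{L^2}^2$. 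For the convective and alignment terms, the same splitting combined with the $L^\infty$ bounds on $(\rho,u)$ and the elliptic bounds on $\nabla\phi,\nabla^2\phi$ gives estimates of the form $\epsilon\|\rho-1\|_{L^2}^2+C_\epsilon\int\rho|\tilde u|^2\,dx$ for any small $\epsilon>0$, since every contribution carries either a factor $|m_c|$ (small by hypothesis) or is quadratic in $\tilde u$.

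Finally I would set $\mathcal{L}(t):=\tilde E(t)+\eta\,\mathcal{C}(t)$ for $\eta>0$ sufficiently small relative to $\psi_m$. The bound $|\mathcal{C}(t)|\ls(\|\tilde u\|_{L^2}+|m_c|)\|\rho-1\|_{L^2}$ together with Young's inequality yields $\mathcal{L}(t)\sim\mathcal{F}(t)$, and adding the two time-derivative identities with the weight $\eta$, absorbing the error terms, produces $\tfrac{d}{dt}\mathcal{L}(t)\leq-c\,\mathcal{F}(t)\leq-c'\mathcal{L}(t)$ for some $c,c'>0$, provided $|m_c(0)|$ is chosen small enough. Gronwall's inequality then gives the exponential upper bound $\mathcal{F}(t)\ls\mathcal{F}_0\,e^{-ct}$. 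The matching lower bound follows from the reverse one-sided inequality $\tfrac{d}{dt}\mathcal{F}(t)\geq-\tilde c\,\mathcal{F}(t)$, obtained by trivially upper-bounding the dissipation through $\|\psi\|_{L^\infty}$ and controlling the cross-term errors by $\mathcal{F}(t)$. The hard part is the closure of the cross-term estimates without any smallness on $\|u\|_{L^\infty}$: only $|m_c|$ is small by hypothesis, so the decomposition $u=\tilde u+m_c$, the Helmholtz structure of $\nabla\pa_t\phi$, and the quadratic-in-$\tilde u$ structure of the convective and alignment contributions must all align for every error to be absorbed into the two good dissipative terms.
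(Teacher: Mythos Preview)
Your proposal is correct and is essentially the same argument as the paper's proof in Section~\ref{sec_lt}. Your Poisson potential $\nabla\phi$ with $\Delta\phi=\rho-1$ is exactly the paper's Bogovskii field $\mathcal{B}[\rho-1]$ (the curl-free constraint in Lemma~\ref{oper-b-lem} forces $\mathcal{B}[f]=\nabla\Delta^{-1}f$), and your crossing functional $\mathcal{C}(t)$ differs from the paper's $\sigma\int\rho(u-m_c)\cdot\mathcal{B}[\rho-1]\,dx$ only by a sign and a harmless $m_c$-term; the splitting $u=\tilde u+m_c$, the $I_1$--$I_5$ error estimates, and the final choice of the small parameter $\eta$ (the paper's $\sigma$) match. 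Two small remarks: the equivalence $H(\rho)\sim(\rho-1)^2$ holds on the whole range $\rho\in[0,\bar\rho]$ (cf.\ Lemma~\ref{lem:3}), so you do not need to import a positive lower bound on $\rho$ from Theorem~\ref{thm:ext}; and the dissipation lower bound is $\psi_m\int\rho|\tilde u|^2\,dx$, not $2\psi_m$.
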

\begin{remark}For the estimate of large-time behavior, we do not require any smallness assumptions on solutions, we only need small initial momentum. Note that the momentum is conserved in time, see Lemma \ref{lem:1} below. Thus if we assume that $m_c(0) = \int_{\T^d} \rho_0 u_0\,dx = 0$, then $m_c(t) = 0$ for all $t \geq 0$, and this gives $\mathcal{F}(t) = E(t)$ for all $t \geq 0$, where $E(t) := \int_{\T^d} \rho|u|^2\,dx + \int_{\T^d} (\rho - 1)^2\,dx$. In general, we obtain that the Lyapunov function $\mathcal{F}$ is bounded from above by the total energy $E$:
\[
\mathcal{F}(t) = \int_{\T^d} \rho|u|^2\,dx + \int_{\T^d} (\rho - 1)^2 \,dx - |m_c|^2 \leq E(t).
\]
\end{remark}
\begin{remark}
The classical solution obtained in Theorem \ref{thm:ext} automatically satisfies the assumptions in Theorem \ref{thm:large}.
\end{remark}

\subsection{Outline of the paper} The rest of this paper is organized as follows. In Section \ref{sec_pre}, we provide a priori energy estimates and recall several useful estimates, which will be significantly used later. In Section \ref{sec_ext}, we study the local-in-time existence and uniqueness of strong solutions to the system \eqref{main_eq2} and the equivalence relation between the system \eqref{CEN} and the reformulated system \eqref{main_eq2}. We then provide the a priori estimates of solutions in the desired Sobolev spaces, which enables us to extend the local-in-time solution to the global one. Finally, Section \ref{sec_lt} is devoted to investigate the large-time behavior of classical solutions for the system \eqref{CEN}.

%%%%%%%%%%%%%%%%%%%%%%%%%%%%%%%%%%%%%%%%%%%%%%%%%%%%%%%%%%%%%%%%%%%%%%%%%%%%%%%%%5
%
%
%                        Section: Preliminaries
%
%
%%%%%%%%%%%%%%%%%%%%%%%%%%%%%%%%%%%%%%%%%%%%%%%%%%%%%%%%%%%%%%%%%%%%%%%%%%%%%%%%%
\section{Preliminaries}\label{sec_pre}
\subsection{Energy estimates} In this part, we provide the conservation of momentum and the energy estimate. 
\begin{lemma}\label{lem:1}Let $(\rho ,u)$ be a global classical solution to the system \eqref{CEN}-\eqref{ini-CEN}. Then we have
$$\begin{aligned}
&\frac{d}{dt} \int_{\T^d} \rho u \,dx = 0,\cr
&\frac{d}{dt}\lt( \frac12\int_{\T^d} \rho|u|^2 dx + \int_{\T^d} \rho\ln\rho\,dx\rt) + \int_{\T^d \times\T^d} \psi(x-y)|u(x)-u(y)|^2 \rho(x)\rho(y) \,dxdy= 0.
\end{aligned}$$
\end{lemma}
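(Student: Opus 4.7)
The plan is to prove the two identities independently through standard energy computations on the periodic domain $\T^d$, exploiting two features: integration by parts produces no boundary terms, and the evenness of $\psi$ permits relabeling the dummy variables $x\leftrightarrow y$ in any double integral.

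For the momentum conservation, I would integrate the momentum equation of \eqref{CEN} over $\T^d$. The terms $\nabla\cdot(\rho u\otimes u)$ and $\nabla\rho$ both vanish upon integration by periodicity, leaving
\[
\frac{d}{dt}\int_{\T^d}\rho u\,dx = -\int_{\T^d\times\T^d}\psi(x-y)\rho(x)\rho(y)\bigl(u(x)-u(y)\bigr)dxdy.
\]
Swapping $x\leftrightarrow y$ in the double integral and invoking $\psi(x-y)=\psi(y-x)$ shows the right-hand side equals its own negative, hence it is zero.

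For the energy-dissipation identity, I would start from the pointwise identity
\[
\partial_t\bigl(\tfrac12\rho|u|^2\bigr)+\nabla\cdot\bigl(\tfrac12\rho|u|^2 u\bigr)=\rho u\cdot(\partial_t u+u\cdot\nabla u),
\]
which follows by expanding and using the continuity equation. Dotting the momentum equation with $u$, integrating over $\T^d$, and discarding the perfect-divergence term yield
\[
\frac{d}{dt}\int_{\T^d}\tfrac12\rho|u|^2\,dx = -\int_{\T^d} u\cdot\nabla\rho\,dx - \int_{\T^d\times\T^d}\psi(x-y)\rho(x)\rho(y)\,u(x)\cdot\bigl(u(x)-u(y)\bigr)dxdy.
\]
Separately, multiplying continuity by $\ln\rho+1$ and integrating by parts produces
\[
\frac{d}{dt}\int_{\T^d}\rho\ln\rho\,dx=\int_{\T^d} u\cdot\nabla\rho\,dx,
\]
so adding the two identities eliminates the pressure-work contribution. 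The last step symmetrizes the remaining nonlocal integral via $x\leftrightarrow y$; the evenness of $\psi$ converts $u(x)\cdot(u(x)-u(y))$ into the nonnegative quadratic form in $|u(x)-u(y)|^2$.

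The main obstacle is not analytic but bookkeeping: tracking the sign of the dissipation and the numerical coefficient in front of the final quadratic form after the $x\leftrightarrow y$ symmetrization, and noting that the two $u\cdot\nabla\rho$ contributions cancel exactly because the isothermal pressure $p=\rho$ is thermodynamically consistent with the internal energy $\rho\ln\rho$. All manipulations are legitimate under the hypothesis that $(\rho,u)$ is a classical solution to \eqref{CEN}.
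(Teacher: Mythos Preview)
Your approach is correct and is exactly what the paper has in mind; its own proof is the single sentence ``The proof can be easily obtained by using the symmetry of the communication weight function $\psi$,'' and you have simply unpacked that. One caveat on the bookkeeping you flag: the $x\leftrightarrow y$ symmetrization yields a factor $\tfrac12$ in front of the dissipation integral (as the paper itself writes in Lemma~\ref{lem_ef1}), so the coefficient $1$ in the stated identity appears to be a typo.
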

\begin{proof} The proof can be easily obtained by using the symmetry of the communication weight function $\psi$.
\end{proof}
\begin{remark}For the reformulated system, we have the following identities:
\[
\int_{\T^d} e^{h(x,t)}\,dx = \int_{\T^d} e^{h_0(x)}\,dx \quad \mbox{and} \quad \int_{\T^d} e^{h(x,t)} u(x,t) \,dx = \int_{\T^d} e^{h_0(x)} u_0(x) \,dx.
\]
for all $t \geq 0$.
\end{remark}
In the following two lemmas, we give a relationship between $\int_{\T^d} \rho \ln \rho\,dx$ and $\|\rho-1\|_{L^2}^2$.
\begin{lemma}\label{lem:2} Let $\rho \in \mc^1(\T^d \times [0,T])$. Then we have
\[
\frac{d}{dt}\int_{\T^d} \rho \ln \rho \,dx = \frac{d}{dt}\int_{\T^d} \rho \int_1^\rho \frac{z - 1}{z^2} \,dzdx.
\]
\end{lemma}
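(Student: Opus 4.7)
The plan is to reduce the identity to an elementary pointwise computation followed by an application of mass conservation. First, I would compute the inner integral explicitly: since
\[
\int_1^{\rho} \frac{z-1}{z^2}\,dz = \int_1^{\rho}\left(\frac{1}{z} - \frac{1}{z^2}\right)dz = \ln\rho + \frac{1}{\rho} - 1,
\]
the right-hand integrand becomes
\[
\rho\int_1^{\rho} \frac{z-1}{z^2}\,dz = \rho\ln\rho + 1 - \rho.
\]

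Next, I would form the pointwise difference of the two integrands:
\[
\rho\ln\rho - \rho\int_1^{\rho} \frac{z-1}{z^2}\,dz = \rho - 1.
\]
Integrating over $\T^d$ gives
\[
\int_{\T^d}\rho\ln\rho\,dx - \int_{\T^d}\rho\int_1^{\rho}\frac{z-1}{z^2}\,dz\,dx = \int_{\T^d}\rho\,dx - |\T^d|.
\]
Since $\rho$ is a probability density, $\int_{\T^d}\rho\,dx = 1$ for all $t\geq 0$ (this conservation of mass follows either from Lemma \ref{lem:1} or directly by integrating the continuity equation), and $|\T^d|$ is a fixed constant. Hence the right-hand side is independent of time, so differentiating in $t$ yields the claimed identity.

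The argument requires no delicate analysis, so there is no real obstacle; the only caveat is ensuring the pointwise identities are justified, which is immediate under the hypothesis $\rho\in\mc^1(\T^d\times[0,T])$ (provided $\rho$ stays positive, so that $\ln\rho$ and the integral $\int_1^{\rho}(z-1)/z^2\,dz$ are well defined — this positivity is guaranteed in the setting of the paper since $h=\ln\rho$ is the working unknown in \eqref{main_eq2}). Differentiation under the integral sign is justified by the $\mc^1$ regularity on the compact domain $\T^d$.
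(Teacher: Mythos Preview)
Your argument is correct and is exactly the ``straightforward computation'' the paper alludes to: evaluating the inner integral gives $\rho\int_1^\rho (z-1)/z^2\,dz = \rho\ln\rho + 1 - \rho$, so the two integrals differ by $\int_{\T^d}(\rho-1)\,dx$, which is constant in time by conservation of mass. Your remark that the identity tacitly relies on mass conservation (not literally stated in the lemma hypothesis but assumed throughout the paper) is a fair observation.
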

\begin{proof}A straightforward computation yields the result.
\end{proof}
\begin{lemma}\label{lem:3} Let $ \rho \in [0,\bar\rho]$ with $\bar\rho > 0$. Then, there exist positive constants $c_1, c_2 > 0$, we have
\[
c_1(\rho - 1)^2 \leq  \rho\int_1^\rho \frac{z - 1}{z^2}\,dz \leq c_2(\rho - 1)^2. 
\]
\end{lemma}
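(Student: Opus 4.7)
\medskip

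\textbf{Proof proposal.} The plan is to compute the integral explicitly, reduce the inequality to a statement about a single smooth function, and then argue by continuity and compactness on $[0,\bar\rho]$.

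First, I would evaluate
\[
\int_1^\rho \frac{z-1}{z^2}\,dz = \int_1^\rho \Bigl(\frac{1}{z} - \frac{1}{z^2}\Bigr) dz = \ln \rho + \frac{1}{\rho} - 1,
\]
so that, after multiplying by $\rho$,
\[
g(\rho) := \rho \int_1^\rho \frac{z-1}{z^2}\,dz = \rho \ln \rho - \rho + 1.
\]
In particular $g(1)=0$, $g'(\rho) = \ln \rho$, $g'(1)=0$, and $g''(\rho) = 1/\rho > 0$, so $g$ is strictly convex on $(0,\infty)$ with unique minimum $g(1)=0$. Hence $g(\rho)\ge 0$, with equality only at $\rho=1$, which already matches the sign of $(\rho-1)^2$.

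Next, I would show that the ratio $\varphi(\rho):=g(\rho)/(\rho-1)^2$ extends to a strictly positive continuous function on all of $[0,\bar\rho]$. A Taylor expansion of $g$ around $\rho=1$ gives
\[
g(\rho) = \tfrac{1}{2}(\rho-1)^2 + O\bigl((\rho-1)^3\bigr),
\]
so $\varphi$ extends continuously at $\rho=1$ with value $1/2$. At $\rho=0$ we use $\lim_{\rho\to 0^+}\rho\ln\rho = 0$ to get $g(0^+)=1$ and $(\rho-1)^2\big|_{\rho=0}=1$, so $\varphi(0)=1$. Elsewhere on $(0,\bar\rho]$ the ratio is manifestly continuous and strictly positive by the convexity argument above.

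Finally, since $\varphi$ is continuous and strictly positive on the compact set $[0,\bar\rho]$, it attains a positive minimum $c_1>0$ and a finite maximum $c_2>0$ there. These constants (depending only on $\bar\rho$) then satisfy
\[
c_1 (\rho-1)^2 \le g(\rho) \le c_2 (\rho-1)^2 \qquad \text{for all } \rho \in [0,\bar\rho],
\]
which is precisely the claim. There is no real obstacle: the only subtle points are verifying the continuous extension of $\varphi$ at the two singular-looking points $\rho=0$ and $\rho=1$, and these follow from an elementary limit and a Taylor expansion respectively.
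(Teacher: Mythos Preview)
Your proposal is correct and follows essentially the same approach as the paper: compute the integral explicitly to get $\rho\ln\rho + 1 - \rho$, form the ratio with $(\rho-1)^2$, check the limits at $\rho=0$ and $\rho=1$ (giving $1$ and $1/2$ respectively), and conclude by continuity and positivity on the compact interval $[0,\bar\rho]$. You supply a bit more detail---the convexity argument for positivity and the Taylor expansion at $\rho=1$---but the strategy is identical.
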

\begin{proof} Set
\[
g(\rho) := \frac{\displaystyle \rho\int_1^\rho \frac{z - 1}{z^2}\,dz}{(\rho - 1)^2}.
\]
Since 
\[
\rho\int_1^\rho \frac{z - 1}{z^2}\,dz = \rho \ln \rho + 1 - \rho,
\]
we easily find 
\[
\lim_{\rho \to 0} g(\rho) = 1>0 \quad \mbox{and} \quad \lim_{\rho \to 1}g(\rho) = \frac12>0.
\]
Thus we deduce that $g(\rho)$ is a continuous function on $[0,\bar\rho]$ with $g(\rho) > 0$, and this concludes the desired result.
\end{proof}
Summarizing the above discussions, we have the following energy estimate.
\begin{lemma}Let $(\rho ,u)$ be any global classical solutions to the system \eqref{CEN}-\eqref{ini-CEN} satisfying $\rho \in [0,\bar\rho]$ with $\bar \rho > 0$. Then we have
\begin{align*}
\begin{aligned}
&\int_{\T^d} \rho|u|^2 \,dx + \int_{\T^d} (\rho - 1)^2 \,dx + \int_0^t\int_{\T^d \times \T^d} \psi(x-y)|u(x)-u(y)|^2 \rho(x)\rho(y) \,dxdyds\cr
&\qquad \ls \int_{\T^d} \rho_0|u_0|^2 \,dx + \int_{\T^d} (\rho_0 - 1)^2 \,dx.
\end{aligned}
\end{align*}
We set 
\[
E(t):= \int_{\T^d} \rho|u|^2 dx + \int_{\T^d} (\rho - 1)^2 dx,
\]
then we find
\[
E(t) \ls E_0.
\]
\end{lemma}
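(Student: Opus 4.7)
The plan is to integrate in time the energy identity already established in Lemma \ref{lem:1}, convert the non-sign-definite entropy term into one comparable to $\|\rho-1\|_{L^2}^2$ via Lemmas \ref{lem:2} and \ref{lem:3}, and finally discard the nonnegative dissipation term to bound $E(t)$.

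First I would integrate the second identity of Lemma \ref{lem:1} from $0$ to $t$, yielding
\[
\tfrac{1}{2}\int_{\T^d}\rho|u|^2\,dx + \int_{\T^d}\rho\ln\rho\,dx + \int_0^t\!\!\int_{\T^d\times\T^d}\psi(x-y)|u(x)-u(y)|^2\rho(x)\rho(y)\,dxdyds = \tfrac{1}{2}\int_{\T^d}\rho_0|u_0|^2\,dx + \int_{\T^d}\rho_0\ln\rho_0\,dx.
\]
The entropy $\int \rho\ln\rho\,dx$ is not a priori nonnegative, so I would rewrite it in a coercive form. A short direct computation gives
\[
\int_{\T^d}\rho\int_1^\rho \frac{z-1}{z^2}\,dz\,dx = \int_{\T^d}(\rho\ln\rho+1-\rho)\,dx = \int_{\T^d}\rho\ln\rho\,dx + |\T^d|-1,
\]
since $\|\rho(\cdot,t)\|_{L^1}=1$; this is precisely the integrated version of Lemma \ref{lem:2}. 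Substituting on both sides of the identity above, the additive constant cancels and I obtain an identity in which every term on the left is manifestly nonnegative.

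Now I invoke Lemma \ref{lem:3} with the hypothesis $\rho\in[0,\bar\rho]$ (and $\rho_0\in[0,\bar\rho]$, inherited by continuity at $t=0$) to bound the quantity $\rho\int_1^\rho (z-1)/z^2\,dz$ below by $c_1(\rho-1)^2$ on the left-hand side and above by $c_2(\rho_0-1)^2$ on the right-hand side. Absorbing the universal constants into $\lesssim$ gives
\[
\int_{\T^d}\rho|u|^2\,dx + \int_{\T^d}(\rho-1)^2\,dx + \int_0^t\!\!\int_{\T^d\times\T^d}\psi(x-y)|u(x)-u(y)|^2\rho(x)\rho(y)\,dxdyds \lesssim \int_{\T^d}\rho_0|u_0|^2\,dx + \int_{\T^d}(\rho_0-1)^2\,dx,
\]
and discarding the nonnegative alignment dissipation on the left then yields $E(t)\lesssim E_0$.

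There is no real obstacle here since all the ingredients are assembled in the previous three lemmas; the only point requiring a bit of care is the \emph{upper} bound in Lemma \ref{lem:3}, which genuinely needs $\rho_0\in[0,\bar\rho]$: for unbounded $\rho_0$ the entropy grows like $\rho_0\ln\rho_0$, faster than $(\rho_0-1)^2$, and the right-hand side could not be controlled by the claimed initial energy.
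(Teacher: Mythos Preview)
Your proof is correct and follows exactly the route the paper intends (the lemma is presented there as ``summarizing the above discussions,'' i.e., combining Lemmas~\ref{lem:1}--\ref{lem:3}).

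One small correction to your closing remark, though it does not affect the argument itself: the growth comparison is reversed. As $\rho\to\infty$ one has $\rho\ln\rho+1-\rho = o\bigl((\rho-1)^2\bigr)$, so the \emph{upper} bound in Lemma~\ref{lem:3} actually holds uniformly in $\bar\rho$. It is the \emph{lower} bound $c_1(\rho-1)^2\le\rho\int_1^\rho(z-1)z^{-2}\,dz$, applied on the left-hand side at time $t$, that genuinely requires $\rho\le\bar\rho$, since the ratio $(\rho\ln\rho+1-\rho)/(\rho-1)^2\to 0$ forces $c_1\to 0$ as $\bar\rho\to\infty$.
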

Even though we find a good energy estimate for the system \eqref{CEN}, we cannot directly employ that estimate for the reformulated system \eqref{main_eq2}. For that, we need the following auxiliary lemma whose proof can be found in \cite{Choi15}.
\begin{lemma} For $0 < a \leq f(x) \leq b$ with $a \leq 1 \leq b$, there exist constants $C(a), C(b) > 0$ such that 
\[
C(b)\int_{\T^d} (f-1)^2\,dx \leq \int_{\T^d} (\ln f)^2 \,dx \leq C(a)\int_{\T^d} (f-1)^2\,dx,
\]
where $C(a)$ and $C(b)$ are explicitly given by
\[
C(a) := \max\lt\{1, \lt(\frac{\ln a}{1-a}\rt)^2\rt\} \quad \mbox{and} \quad C(b) := \min\lt\{1, \lt(\frac{\ln b}{b-1}\rt)^2\rt\},
\]
respectively. 
\end{lemma}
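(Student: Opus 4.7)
The plan is to reduce the claimed integral inequality to a pointwise comparison between $(\ln f)^2$ and $(f-1)^2$ and then integrate over $\T^d$. Since the hypothesis gives $f(x) \in [a,b]$ for every $x$, it is enough to prove
\[
C(b)(t-1)^2 \;\leq\; (\ln t)^2 \;\leq\; C(a)(t-1)^2 \qquad \text{for all } t \in [a,b],
\]
with $C(a), C(b)$ as in the statement. The $\max$ with $1$ in $C(a)$ and the $\min$ with $1$ in $C(b)$ are needed only to cover the degenerate cases $a=1$ or $b=1$, where the explicit ratios are indeterminate of the form $0/0$ but have natural limiting value $1$.

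The main ingredient is the behavior of the auxiliary function $\phi(t) := (\ln t)/(t-1)$ on $(0,\infty)\setminus\{1\}$, extended continuously by $\phi(1) = 1$ via $\lim_{t\to 1}(\ln t)/(t-1) = 1$. A short calculation gives
\[
\phi'(t) = \frac{(t-1)/t - \ln t}{(t-1)^2},
\]
and writing $h(t) := (t-1)/t - \ln t$, one has $h(1) = 0$ and $h'(t) = (1-t)/t^2$, so $h$ is maximized at $t = 1$ with value $0$. Therefore $\phi'(t) \leq 0$ on $(0,\infty)$, so $\phi$ is strictly decreasing and positive throughout. For $t \in [a,b]$ with $a \leq 1 \leq b$ this gives $\phi(b)^2 \leq \phi(t)^2 \leq \phi(a)^2$, which after multiplication by $(t-1)^2$ and the identification $\phi(a)^2 = (\ln a/(1-a))^2$, $\phi(b)^2 = (\ln b/(b-1))^2$ is exactly the desired pointwise bound. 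Integrating over $\T^d$ concludes the lemma.

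The only substantive step is the monotonicity of $\phi$, which itself boils down to the one-line sign analysis of $h$ above; everything else is bookkeeping, including the $\max/\min$ with $1$. Because the argument is purely a one-variable calculus fact about $\log$ and has no interaction with the PDE structure of the paper, it is reasonable to quote it from \cite{Choi15} rather than reproduce it in full.
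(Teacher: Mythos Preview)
Your argument is correct: the monotonicity of $\phi(t)=(\ln t)/(t-1)$ (extended by $\phi(1)=1$) on $(0,\infty)$ immediately gives the sharp pointwise bounds $\phi(b)^2(t-1)^2\le(\ln t)^2\le\phi(a)^2(t-1)^2$ on $[a,b]$, and since $\phi(a)\ge1\ge\phi(b)$ for $a\le1\le b$, the $\max$/$\min$ with $1$ is indeed only cosmetic (it resolves the indeterminate ratio at the endpoints $a=1$ or $b=1$). The paper itself does not prove this lemma but defers to \cite{Choi15}, so there is no in-paper argument to compare against; your self-contained proof simply fills in what the paper cites.
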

Using the above lemma, we finally have the $L^2$-energy estimate for the reformulated system \eqref{main_eq2}.
\begin{lemma}\label{lem_energy_hu} Let $(h ,u)$ be any global classical solutions to the system \eqref{main_eq2}-\eqref{ini-main_eq2} satisfying $h \in L^\infty(\T^d \times \R_+)$. Then we have
$$\begin{aligned}
&\int_{\T^d} e^{h(x,t)}|u(x,t)|^2 \,dx + \int_{\T^d} h(x,t)^2 \,dx + \int_0^t\int_{\T^d \times \T^d} \psi(x-y)|u(x,s)-u(y,s)|^2 e^{h(x,s)+h(y,s)} \,dxdyds\cr
&\qquad \ls \int_{\T^d} e^{h_0(x)}|u_0(x)|^2 \,dx + \int_{\T^d} h_0(x)^2 \,dx.
\end{aligned}$$
\end{lemma}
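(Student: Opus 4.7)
The plan is to reduce the claim to the $L^2$-energy estimate already established for the original system \eqref{CEN} via the change of variable $\rho = e^h$, and then to translate the density-fluctuation term $\|\rho-1\|_{L^2}^2$ into $\|h\|_{L^2}^2$ using the preceding auxiliary lemma on the equivalence of $\int (\ln f)^2\,dx$ and $\int (f-1)^2\,dx$.

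First I would observe that the hypothesis $h \in L^\infty(\T^d \times \R_+)$ gives pointwise two-sided bounds $0 < a \leq \rho(x,t) = e^{h(x,t)} \leq b$ with $a := e^{-\|h\|_{L^\infty}}$ and $b := e^{\|h\|_{L^\infty}}$, uniform in $(x,t)$, and the same bounds hold at $t=0$. Because the classical solution $\rho$ is continuous with total mass $\int_{\T^d} \rho\,dx = 1$ by Lemma \ref{lem:1}, the intermediate value theorem forces $a \leq 1 \leq b$ (the degenerate case $a = b$ gives $\rho \equiv 1$, hence $h \equiv 0$, and the claim is trivial).

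Next, by the equivalence between \eqref{CEN} and \eqref{main_eq2} under $\rho = e^h$ (formalized in Lemma \ref{lem_equiv1}), the pair $(h,u)$ yields a classical solution $(\rho, u)$ of \eqref{CEN} with $\rho \in [0,b]$. Applying the already-stated $L^2$-energy estimate for \eqref{CEN} controls $\int \rho|u|^2\,dx + \int(\rho-1)^2\,dx$ together with the interaction dissipation integral by $\int \rho_0|u_0|^2\,dx + \int(\rho_0-1)^2\,dx$. After the substitution $\rho = e^h$ and $\rho(x)\rho(y) = e^{h(x)+h(y)}$, the kinetic term and the dissipation term on the left match the statement of the lemma verbatim, and likewise the kinetic initial term $\int \rho_0|u_0|^2\,dx = \int e^{h_0}|u_0|^2\,dx$ is exactly what appears on the right; only the density-fluctuation terms need further treatment.

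For these I would invoke the preceding auxiliary lemma twice. With $f = \rho(\cdot,t) \in [a,b]$ and $a \leq 1 \leq b$, it yields $\int h^2\,dx = \int (\ln \rho)^2\,dx \leq C(a)\int (\rho-1)^2\,dx$, which upgrades the density-fluctuation term on the left to $\int h^2\,dx$. With $f = \rho_0$ (bounded by the same constants via continuity at $t=0$), it yields $\int (\rho_0-1)^2\,dx \leq C(b)^{-1}\int h_0^2\,dx$, which downgrades the corresponding term on the right to $\int h_0^2\,dx$. Chaining the three inequalities closes the estimate. The only delicate point is ensuring $a \leq 1 \leq b$ so that the auxiliary lemma applies; this rests on the probability-density normalization of $\rho$ and is essentially the sole non-routine step, everything else being direct substitution.
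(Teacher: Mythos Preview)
Your proposal is correct and follows exactly the route the paper indicates: the paper simply writes ``Using the above lemma, we finally have the $L^2$-energy estimate for the reformulated system'' after stating the auxiliary comparison between $\int(\ln f)^2\,dx$ and $\int(f-1)^2\,dx$, leaving the details you spell out (equivalence via $\rho=e^h$, application of the energy estimate for \eqref{CEN}, and two invocations of the auxiliary lemma) implicit. Your observation that the mass normalization forces $a\le 1\le b$, which is needed for the auxiliary lemma to apply, is a point the paper glosses over.
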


\subsection{Moser type inequality \& Bogovskii type estimate}\label{sec_bogo}
We first recall the Moser type inequality which will be frequently used in this paper later for the global-in-time regularity of solutions.
\begin{lemma}\label{lem:bern}For any pair of functions $f,g \in (H^k \cap L^\infty)(\T^d)$, we have
\[
\|\nabla^k (fg)\|_{L^2} \ls \|\nabla^k f\|_{L^2}\|g\|_{L^\infty} + \|f\|_{L^\infty}\|\nabla^k g\|_{L^2}.
\]
Furthermore if $\nabla f \in L^\infty(\T^d)$ we have
\[
\|\nabla^k (fg) - f\nabla^k g\|_{L^2} \ls \|\nabla^k f\|_{L^2}\|g\|_{L^\infty} + \|\nabla f\|_{L^\infty}\|\nabla^{k-1} g\|_{L^2}.
\]
\end{lemma}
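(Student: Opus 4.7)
The plan is to treat both inequalities via the Leibniz rule together with Gagliardo--Nirenberg interpolation on $\T^d$, which is the standard route to Moser-type product estimates.

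For the first bound, I would expand $\nabla^k(fg)$ by the Leibniz rule, schematically
\[
\nabla^k(fg) = \sum_{j=0}^{k} c_{k,j}\,\nabla^{j} f \otimes \nabla^{k-j} g,
\]
so that $\|\nabla^k(fg)\|_{L^2}$ is controlled by a finite sum of terms of the form $\|\nabla^j f \, \nabla^{k-j} g\|_{L^2}$. For $1 \le j \le k-1$ I would pick conjugate exponents $p_j,q_j$ with $1/p_j+1/q_j = 1/2$ and apply H\"older, then invoke Gagliardo--Nirenberg on the torus,
\[
\|\nabla^{j} f\|_{L^{p_j}} \ls \|f\|_{L^\infty}^{1-j/k}\|\nabla^{k} f\|_{L^2}^{j/k}, \qquad \|\nabla^{k-j} g\|_{L^{q_j}} \ls \|g\|_{L^\infty}^{j/k}\|\nabla^{k} g\|_{L^2}^{1-j/k},
\]
with $p_j,q_j$ chosen so the scaling is consistent. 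Multiplying these and using Young's inequality $ab \ls a^r + b^{r'}$ with $r=k/j$, $r'=k/(k-j)$ collapses the product into $\|\nabla^k f\|_{L^2}\|g\|_{L^\infty} + \|f\|_{L^\infty}\|\nabla^k g\|_{L^2}$. The boundary terms $j=0$ and $j=k$ are already in the claimed form.

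For the commutator inequality, the key observation is that the subtraction of $f\nabla^k g$ removes precisely the $j=0$ term, leaving only indices with $j \ge 1$. Hence in every remaining summand I can peel off one derivative from $f$, writing $\nabla^j f = \nabla^{j-1}(\nabla f)$, and then run the same H\"older/Gagliardo--Nirenberg argument as above but now interpolating $\nabla^{j-1}(\nabla f)$ between $\|\nabla f\|_{L^\infty}$ and $\|\nabla^{k} f\|_{L^2}$ (with exponent $(j-1)/(k-1)$) and $\nabla^{k-j} g$ between $\|g\|_{L^\infty}$ and $\|\nabla^{k-1} g\|_{L^2}$ (so that the highest derivative on $g$ is of order $k-1$). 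Young's inequality then yields the claimed bound $\|\nabla^k f\|_{L^2}\|g\|_{L^\infty} + \|\nabla f\|_{L^\infty}\|\nabla^{k-1} g\|_{L^2}$.

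The only delicate point, and hence the main obstacle, is selecting the H\"older exponents so that the Gagliardo--Nirenberg inequality applies at each intermediate $j$ without hitting a forbidden endpoint (in particular when $d$ is small or $k-j=0,1$); these corner cases are handled separately using $L^\infty$ control of $f$ or $\nabla f$ directly, together with Sobolev embedding. Once the exponent bookkeeping is done, both estimates are immediate. I would in practice just cite Majda--Bertozzi or Klainerman--Majda for the exact statement, since the argument is standard.
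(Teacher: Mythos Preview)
Your sketch is correct and follows the standard Leibniz/H\"older/Gagliardo--Nirenberg route; the exponent bookkeeping you outline (interpolating $\nabla^{j-1}(\nabla f)$ between $\|\nabla f\|_{L^\infty}$ and $\|\nabla^k f\|_{L^2}$ with weight $(j-1)/(k-1)$, and $\nabla^{k-j}g$ between $\|g\|_{L^\infty}$ and $\|\nabla^{k-1}g\|_{L^2}$ with the complementary weight) is exactly right, and Young then collapses the product as you say. There is nothing to compare against in the paper itself: the lemma is merely \emph{recalled} there as a known Moser-type inequality without proof, so your suggestion to cite a standard reference (e.g.\ Majda or Klainerman--Majda) is precisely what the paper does in spirit.
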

For the large-time behavior estimate, in order to have the dissipation rate for the density, we recall Bogovskii type estimate for the following stationary transport equation with auxiliary equations:
\begin{equation} \label{B-1}
\nabla_x \cdot v = f, \quad \nabla_x \times v = 0 \quad \mbox{in}~~\T^d, \quad \mbox{and} \quad \int_{\T^d} \,v\,dx = 0.
\end{equation}
For a given function $f \in L^2_\#(\T^d) := \{ f \in L^2(\T^d) : \int_{\T^d} f\,dx = 0 \}$, consider a operator ${\mathcal B}: f \mapsto v$. Then, the relations between the norms of $v$ and $f$ can be given in the following lemma, which can be obtained from the estimate of elliptic regularity for Poisson's equation, see \cite{Choi16_2,Galdi} for more details.
\begin{lemma}
\label{oper-b-lem}
Consider the equations \eqref{B-1} and the operator ${\mathcal B}$ defined above.
\begin{enumerate}
\item
$v = \mathcal{B}[f]$ is a solution to the problem \eqref{B-1} and a linear operator from $L^2_\#(\T^d)$ into $H^1(\T^d)$, i.e.,
\[
\|\mathcal{B}[f] \|_{H^1} \leq C\| f \|_{L^2},
\]
for some $C>0$.
\item If a function $f \in H^1(\T^d)$ can be written in the form
$f = \nabla \cdot g$ with $g \in H^1(\T^d)$, then
\[
\| \mathcal{B}[f] \|_{L^2} \leq C\|g\|_{L^2},
\]
for some $C>0$.
\end{enumerate}
\end{lemma}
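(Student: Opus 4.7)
The plan is to reduce the overdetermined system \eqref{B-1} to a single scalar Poisson equation and then invoke standard elliptic regularity on the torus. On $\T^d$ every vector field $v$ satisfying $\nabla\times v=0$ together with $\int_{\T^d}v\,dx=0$ is a gradient: the Fourier expansion forces $\widehat{v}(k)=\lambda(k)\,k$ for every $k\neq 0$, which is exactly the Fourier signature of $\nabla\phi$, and the only harmonic $1$-forms on $\T^d$ are the constant vector fields, which the zero-mean hypothesis eliminates. I therefore set $v=\nabla\phi$ with $\int_{\T^d}\phi\,dx=0$, reducing \eqref{B-1} to
\begin{equation*}
\Delta\phi=f\quad\text{in }\T^d,\qquad \int_{\T^d}\phi\,dx=0.
\end{equation*}

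For $f\in L^2_\#(\T^d)$, the condition $\int_{\T^d}f\,dx=0$ matches the solvability condition for the Laplacian acting on zero-mean periodic functions, and the solution is given explicitly by $\widehat{\phi}(k)=-\widehat{f}(k)/|k|^2$ for $k\neq 0$, producing a unique $\phi\in H^2(\T^d)$. This gives a linear operator $\mathcal{B}:f\mapsto\nabla\phi$; uniqueness of $\mathcal{B}[f]$ as a solution of \eqref{B-1} is automatic, since any two candidates differ by a curl-free, divergence-free, zero-mean field on $\T^d$, which vanishes by the same Fourier argument.

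Assertion (1) then follows from standard periodic elliptic regularity $\|\phi\|_{H^2}\ls\|f\|_{L^2}$, which yields $\|\mathcal{B}[f]\|_{H^1}=\|\nabla\phi\|_{H^1}\ls\|f\|_{L^2}$. This estimate can equivalently be obtained by testing $\Delta\phi=f$ first against $\phi$ (using Poincar\'e on the zero-mean function $\phi$) and then against $\Delta\phi$. For assertion (2), when $f=\nabla\cdot g$ with $g\in H^1(\T^d)$, I test $\Delta\phi=\nabla\cdot g$ against $\phi$ and integrate by parts on the closed manifold $\T^d$ to obtain
\begin{equation*}
\int_{\T^d}|\nabla\phi|^2\,dx=\int_{\T^d} g\cdot\nabla\phi\,dx,
\end{equation*}
and Cauchy--Schwarz delivers $\|\mathcal{B}[f]\|_{L^2}=\|\nabla\phi\|_{L^2}\leq\|g\|_{L^2}$.

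The only genuinely delicate step is the initial reduction $v=\nabla\phi$: on a general multiply connected domain, curl-freeness alone does not imply exactness and one would pick up an additional harmonic contribution. The torus is special precisely because its harmonic component is one-dimensional per coordinate and consists of constants, so the zero-mean hypothesis in \eqref{B-1} is exactly what is needed to rule it out. Once this gradient representation is secured, the two stated bounds are purely routine elliptic theory, and no further ideas are required.
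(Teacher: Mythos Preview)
Your proof is correct and follows precisely the route the paper indicates: the lemma is not proved in the paper but is stated with the remark that it ``can be obtained from the estimate of elliptic regularity for Poisson's equation'' together with citations to \cite{Choi16_2,Galdi}. Your reduction $v=\nabla\phi$ via the curl-free and zero-mean conditions, followed by the standard $H^2$ estimate for $\Delta\phi=f$ on $\T^d$ and the testing argument for part~(2), is exactly this elliptic-regularity approach made explicit.
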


%%%%%%%%%%%%%%%%%%%%%%%%%%%%%%%%%%%%%%%%%%%%%%%%%%%%%%%%%%%%%%%%%%%%%%%%%%%%%%%%%5
%
%
%                        Section: Global existence of strong solutions
%
%
%%%%%%%%%%%%%%%%%%%%%%%%%%%%%%%%%%%%%%%%%%%%%%%%%%%%%%%%%%%%%%%%%%%%%%%%%%%%%%%%%
\section{Global existence of strong solutions}\label{sec_ext}
\subsection{Local-in-time existence and uniqueness} In this subsection, we present the local existence of the unique strong solution for the system \eqref{CEN}. Note that as we did at the formal level, we can find the relation between the classical solutions $(\rho,u)$ and $(h,u)$ to the systems \eqref{CEN} and \eqref{main_eq2}, respectively, in the following two lemmas. The proofs can be obtained by taking the similar strategy as in \cite{STW}, where the compressible Euler equations with a linear damping is studied.
\begin{lemma}\label{lem_equiv1}
For any $T>0$, if $(\rho,u) \in \mathcal{C}^1(\T^d \times [0,T])$ is a solution to the system \eqref{CEN} with $\rho > 0$, then $(h,u) \in \mathcal{C}^1(\T^d \times [0,T])$ is a solution for the system \eqref{main_eq2} with $e^h > 0$. Conversely, if $(h,u) \in \mathcal{C}^1(\T^d \times [0,T])$ is a solution of the system \eqref{main_eq2} with $e^h > 0$, then $(\rho, u) \in \mathcal{C}^1(\T^d \times [0,T])$ is a solution of \eqref{CEN} with $\rho =  e^h > 0$.
\end{lemma}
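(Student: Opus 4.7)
The plan is to verify both implications by direct algebraic manipulation, using the chain rule and the fact that $\rho > 0$ (so division by $\rho$ is permitted) and $e^h > 0$. The claim is essentially that the substitution $h = \ln \rho$ (equivalently $\rho = e^h$) is a $\mathcal{C}^1$ diffeomorphism on the level of solutions, provided the $\mathcal{C}^1$ regularity and positivity hold.

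For the forward direction, I would set $h := \ln \rho$, which is $\mathcal{C}^1$ because $\rho \in \mathcal{C}^1$ and $\rho > 0$. For the continuity equation, I would divide $\partial_t \rho + \nabla \cdot(\rho u) = 0$ by $\rho$ to obtain
\[
\frac{\partial_t \rho}{\rho} + \frac{\nabla \rho}{\rho}\cdot u + \nabla \cdot u = 0,
\]
and recognize $\partial_t \rho/\rho = \partial_t h$ and $\nabla \rho/\rho = \nabla h$, giving the first equation of \eqref{main_eq2}. For the momentum equation, I would expand
\[
\partial_t(\rho u) + \nabla \cdot (\rho u \otimes u) = u\bigl[\partial_t \rho + \nabla\cdot(\rho u)\bigr] + \rho\bigl[\partial_t u + u\cdot \nabla u\bigr] = \rho\bigl[\partial_t u + u\cdot \nabla u\bigr],
\]
using the continuity equation on the first bracket. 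Using $\nabla \rho = \rho \nabla h$ and $\rho(y) = e^{h(y)}$ in the interaction integral, I would then divide by $\rho > 0$ to recover the second equation of \eqref{main_eq2}.

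For the converse direction, I would set $\rho := e^h$, which is $\mathcal{C}^1$ and strictly positive. Direct computation gives $\partial_t \rho = e^h \partial_t h$ and $\nabla \rho = e^h \nabla h$, so multiplying the first equation of \eqref{main_eq2} by $e^h$ yields $\partial_t \rho + \nabla \rho \cdot u + \rho\,\nabla \cdot u = 0$, which is $\partial_t \rho + \nabla \cdot(\rho u) = 0$. Multiplying the second equation of \eqref{main_eq2} by $\rho$ gives $\rho\bigl[\partial_t u + u\cdot\nabla u\bigr] + \nabla \rho = -\rho \int \psi(x-y)(u(x)-u(y))\rho(y)\,dy$. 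By the same expansion as above (now using the just-derived continuity equation in reverse), the left-hand side equals $\partial_t(\rho u) + \nabla \cdot(\rho u \otimes u) + \nabla \rho$, which is exactly the second equation of \eqref{CEN}.

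There is no real obstacle here: the argument is a matter of bookkeeping with the chain rule and the product rule, together with the observation that the positivity hypotheses on $\rho$ and $e^h$ make both the logarithm and the division by $\rho$ legitimate $\mathcal{C}^1$ operations on $\T^d \times [0,T]$. The only minor care is to verify that the $\mathcal{C}^1$ regularity really transfers through the exponential/logarithm (which is automatic since these are smooth functions on $(0,\infty)$ and $\rho, h$ take values in compact subsets of the appropriate half-lines over $\T^d \times [0,T]$).
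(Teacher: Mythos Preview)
Your proposal is correct and matches the approach the paper intends: the paper does not write out a proof of this lemma but simply remarks that it follows by the same strategy as in \cite{STW}, which is precisely the direct chain-rule/product-rule verification you carry out. There is nothing to add.
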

\begin{lemma} For any $T>0$, if $(\rho,u) \in \mathcal{C}^1(\T^d \times [0,T])$ is a uniformly bounded solution of \eqref{CEN} with $\rho_0(x) > 0$, then $\rho(x,t) > 0$ on $\T^d \times [0,T]$. Conversely, If $(h,u)\in \mathcal{C}^1(\T^d \times [0,T])$ is a uniformly bounded solution of \eqref{main_eq2} with $e^{h_0} >0$, then $e^{h(x,t)} >0$ on $\T^d \times [0,T]$.
\end{lemma}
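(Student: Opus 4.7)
The plan is to propagate positivity along the Lagrangian characteristics of the velocity field. Define the flow $X(t,x)$ by the ODE
\[
\frac{d}{dt}X(t,x) = u(X(t,x),t), \qquad X(0,x) = x.
\]
Since $u \in \mathcal{C}^1(\T^d \times [0,T])$ and $\T^d \times [0,T]$ is compact, $u$ is Lipschitz in $x$ uniformly in $t$, so by the Cauchy--Lipschitz theorem the flow exists, is unique, and $X(t,\cdot)\colon \T^d \to \T^d$ is a $\mathcal{C}^1$-diffeomorphism for every $t \in [0,T]$.

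For the first assertion, I would rewrite the continuity equation in \eqref{CEN} in non-conservative form,
\[
\partial_t \rho + u \cdot \nabla \rho = -\rho\,\nabla \cdot u,
\]
and read it along the flow. This gives the scalar ODE $\frac{d}{dt}\rho(X(t,x),t) = -\rho(X(t,x),t)\,(\nabla\cdot u)(X(t,x),t)$, which integrates to
\[
\rho(X(t,x),t) = \rho_0(x)\,\exp\!\left(-\int_0^t (\nabla\cdot u)(X(s,x),s)\,ds\right).
\]
Because $\nabla u$ is continuous on the compact set $\T^d\times [0,T]$, we have $\|\nabla\cdot u\|_{L^\infty(\T^d\times[0,T])} =: M < \infty$, so the exponential factor is bounded below by $e^{-MT}>0$. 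Continuity of $\rho_0$ on the compact torus together with the pointwise positivity hypothesis gives $\inf_{x\in\T^d}\rho_0(x) > 0$, hence $\rho(X(t,x),t) > 0$ uniformly. Surjectivity of $X(t,\cdot)$ then yields $\rho(y,t) > 0$ for every $y \in \T^d$.

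For the converse, the first equation of \eqref{main_eq2} reads $\partial_t h + u\cdot \nabla h = -\nabla\cdot u$, which along the same characteristics integrates to
\[
h(X(t,x),t) = h_0(x) - \int_0^t (\nabla\cdot u)(X(s,x),s)\,ds.
\]
Under the uniform boundedness hypothesis on $(h,u)$, the right-hand side is finite on $\T^d\times[0,T]$, so $h$ remains a real-valued continuous function, and therefore $e^{h(x,t)}>0$ pointwise (the lemma is essentially saying that $h$ cannot drift to $-\infty$, which is ruled out by the $L^\infty$ bound).

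There is no substantive obstacle here; the only point requiring mild care is to verify that the flow $X(t,\cdot)$ is a global diffeomorphism of $\T^d$ and that $\nabla\cdot u$ is genuinely bounded, both of which follow from the $\mathcal{C}^1$ regularity of $u$ on a compact domain. Everything else is the standard method of characteristics applied to a transport equation with a linear zero-order term.
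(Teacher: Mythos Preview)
Your proof is correct and is precisely the standard characteristics argument the paper has in mind when it defers to \cite{STW}; integrating the non-conservative continuity equation along the Lagrangian flow of $u$ yields the explicit exponential formula for $\rho$, and the compactness of $\T^d\times[0,T]$ together with $\mathcal{C}^1$ regularity supplies the needed uniform bounds. Your observation that the converse direction is essentially trivial (since $h\in\mathcal{C}^1$ on a compact set is automatically real-valued and bounded, hence $e^h>0$) is also accurate.
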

We can also find the following local-in-time existence theory for the system \eqref{main_eq2} by using a similar argument as in \cite{KMT15,Majda}.
\begin{lemma}\label{lem:local} Let $s > d/2+1$. If $(h_0,u_0) \in H^s(\T^d)$, then there exists a unique local solution $(h,u) \in \mathcal{C}([0,T]; H^s(\T^d)) \cap \mathcal{C}^1([0,T];H^{s-1}(
\T^d))$ to the system \eqref{main_eq2}-\eqref{ini-main_eq2} for some finite $T > 0$. More precisely, we can show that if $\|(h_0,u_0)\|_{H^s}^2 \leq M_1$, then there exist $T_0$ and $M_2 > M_1$ such that $\sup_{0\leq t \leq T_0}\|(h,u)\|_{H^s}^2 \leq M_2$.
\end{lemma}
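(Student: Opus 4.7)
My plan is to prove Lemma \ref{lem:local} by the standard Picard-type iteration for quasi-linear symmetric hyperbolic systems in the spirit of \cite{Majda}, adapted to handle the nonlocal source. Given an iterate $(h^n,u^n)$ with $(h^n,u^n)|_{t=0}=(h_0,u_0)$, I would define $(h^{n+1},u^{n+1})$ as the solution of the linearized system
\begin{align*}
&\pa_t h^{n+1} + u^n\cdot\nabla h^{n+1} + \nabla\cdot u^{n+1} = 0,\\
&\pa_t u^{n+1} + u^n\cdot\nabla u^{n+1} + \nabla h^{n+1} = -\int_{\T^d}\psi(x-y)\lt(u^n(x)-u^n(y)\rt)e^{h^n(y)}\,dy,
\end{align*}
starting from $(h^0,u^0)\equiv (h_0,u_0)$. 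Since the principal part is linear symmetric hyperbolic with a smooth lower-order forcing, each linear problem admits a unique solution in $\mc([0,T];H^s(\T^d))$ by classical theory.

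The heart of the argument is a uniform $H^s$ bound on a common interval $[0,T_0]$. For $0\leq k\leq s$, I would apply $\nabla^k$ to each equation, test against $\nabla^k h^{n+1}$ and $\nabla^k u^{n+1}$, and exploit the antisymmetry of the principal coupling: the contributions from $\nabla\cdot u^{n+1}$ and $\nabla h^{n+1}$ cancel after integration by parts. The transport commutators $[\nabla^k,u^n\cdot\nabla]$ are controlled by the Moser estimate (Lemma \ref{lem:bern}) together with the embedding $H^s\hookrightarrow W^{1,\infty}$ (which holds since $s>d/2+1$), producing terms of the form $\|\nabla u^n\|_{L^\infty}(\|h^{n+1}\|_{H^s}^2+\|u^{n+1}\|_{H^s}^2)$. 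The nonlocal forcing is estimated by rewriting it as $(\psi\ast e^{h^n})(x)u^n(x) - \bigl(\psi\ast(e^{h^n}u^n)\bigr)(x)$; the hypothesis $\psi\in W^{s+1,\infty}$ and Young's inequality for convolutions, combined with Moser's composition estimate $\|e^{h^n}\|_{H^s}\leq C(\|h^n\|_{L^\infty})(1+\|h^n\|_{H^s})$, give $H^s$-control in terms of a continuous increasing function of $\|(h^n,u^n)\|_{H^s}$. Assembling these yields a differential inequality
\[
\frac{d}{dt}\|(h^{n+1},u^{n+1})\|_{H^s}^2 \ls \Phi\bigl(\|(h^n,u^n)\|_{H^s}\bigr)\bigl(1+\|(h^{n+1},u^{n+1})\|_{H^s}^2\bigr),
\]
so that if $\|(h_0,u_0)\|_{H^s}^2\leq M_1$, one may choose $M_2>M_1$ and $T_0>0$ such that the bound $\sup_{[0,T_0]}\|(h^n,u^n)\|_{H^s}^2\leq M_2$ propagates inductively in $n$.

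With the uniform bound in place, I would show the sequence is Cauchy in $\mc([0,T_0];L^2(\T^d))$. Setting $(H^n,U^n):=(h^{n+1}-h^n,u^{n+1}-u^n)$ and subtracting consecutive linearized systems, an $L^2$-energy estimate using the uniform $H^s$ control and Gronwall yields, after shrinking $T_0$ if necessary, a contractive bound $\sup_{[0,T_0]}\|(H^n,U^n)\|_{L^2}\leq \tfrac12\sup_{[0,T_0]}\|(H^{n-1},U^{n-1})\|_{L^2}$. The limit $(h,u)$ lies in $L^\infty(0,T_0;H^s)$ with time derivatives in $L^\infty(0,T_0;H^{s-1})$ directly from the equation, and strong continuity in $H^s$ is recovered by the standard weak-continuity-plus-norm-continuity argument (or Bona--Smith mollification). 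Uniqueness follows by applying the same $L^2$ energy identity to the difference of two solutions. The main obstacle is the combined effect of the exponential nonlinearity $e^h$ and the nonlocal convolution when commuting with $\nabla^k$: the closure of the $H^s$ estimate rests on Moser's composition lemma (which requires the automatic $L^\infty$ bound on $h$ from $s>d/2$) together with the extra regularity $\psi\in W^{s+1,\infty}$, which provides the derivative of room needed to absorb $\nabla^k$ onto $\psi$ in the convolution without losing control of $u^n$.
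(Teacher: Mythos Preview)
Your proposal is correct and follows essentially the same route as the paper: the paper does not give a detailed proof of Lemma~\ref{lem:local} but simply refers to the standard Picard/energy-method argument for symmetric hyperbolic systems in \cite{Majda} (and \cite{KMT15}), which is precisely the scheme you carry out. In fact you supply considerably more detail than the paper does, and your treatment of the nonlocal forcing via the convolution rewriting and Moser composition estimate for $e^{h^n}$ is exactly the extra ingredient needed beyond the classical theory.
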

\subsection{A priori estimates} In this subsection, we provide the a priori estimates for the global existence of strong solutions to the system \eqref{main_eq2}. This together with the standard continuity argument yields that the local-in-time strong solution can be extended to the global one.

For notational simplicity, we denote by $U := (h,u)$, i.e., $U_0 = (h_0,u_0)$ and $\|U\|_{H^s}= \|(h,u)\|_{H^s}$ in the rest of this section. We then begin by estimating $H^s$-norm of $U$ in the lemma below. Note that the $L^2$-estimate essentially obtained in Lemma \ref{lem_energy_hu} is even needed for the estimates of higher order derivative of $U$.

\begin{lemma}\label{lem:aux}Let $s > d/2 + 1$ and $T>0$ be given. Suppose $\sup_{0 \leq t \leq T}\|U(\cdot,t)\|_{H^s}^2 \leq \epsilon_1$ for sufficiently small $\epsilon_1 > 0$. Then we have
\[
\sup_{0 \leq t \leq T}\|U(\cdot,t)\|_{L^2} \leq C\|U_0\|_{L^2},
\]
and
\bq\label{est_hk}
\frac{d}{dt}\|\nabla^k U\|_{L^2}^2 + \psi_m\|\nabla^k u\|_{L^2}^2 \leq C\epsilon_1\|\nabla^k U\|_{L^2}^2 + C\|u\|_{H^{k-1}}^2 + C\|U_0\|_{L^2}^2,
\eq
for $1 \leq k \leq s+1$. Here $C>0$ is independent of $t$.
\end{lemma}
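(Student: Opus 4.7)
The $L^2$ bound is essentially a direct consequence of Lemma \ref{lem_energy_hu}. Since $s>d/2$, the Sobolev embedding together with $\|U\|_{H^s}^2\leq \epsilon_1$ yields $\|h\|_{L^\infty}\ls\sqrt{\epsilon_1}$, so that $e^h$ is pinched between two positive constants once $\epsilon_1$ is small. Hence $\int_{\T^d} e^h|u|^2\,dx \gs \|u\|_{L^2}^2$, and the same comparison applied at $t=0$ (valid because $h_0$ is also small) turns Lemma \ref{lem_energy_hu} into $\|U(\cdot,t)\|_{L^2}\leq C\|U_0\|_{L^2}$, uniformly in $t$.

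For the higher-order estimate, the plan is to apply $\nabla^k$ to both equations in \eqref{main_eq2}, test against $\nabla^k h$ and $\nabla^k u$ respectively, and sum. The pressure-velocity coupling cancels by integration by parts, since $\int \nabla^k h\,(\nabla\!\cdot\!\nabla^k u)\,dx + \int \nabla^k u\cdot\nabla(\nabla^k h)\,dx=0$. Each convection term, for instance $\int \nabla^k h\,\nabla^k(u\cdot\nabla h)\,dx$, I would rewrite by isolating the principal part $u\cdot\nabla(\nabla^k h)$, whose contribution becomes $\tfrac12\int(\nabla\!\cdot\!u)|\nabla^k h|^2\,dx$ after another integration by parts, and estimate the commutator $\nabla^k(u\cdot\nabla h)-u\cdot\nabla\nabla^k h$ by the second bound in Lemma \ref{lem:bern}. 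Because $s>d/2+1$, the factors $\|u\|_{L^\infty}$, $\|\nabla u\|_{L^\infty}$, $\|\nabla h\|_{L^\infty}$ are all controlled by $\|U\|_{H^s}$ via Sobolev embedding, so the total convection contribution folds into $C\epsilon_1\|\nabla^k U\|_{L^2}^2$. The term $\int \nabla^k u\cdot\nabla^k(u\cdot\nabla u)\,dx$ is handled identically.

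The crucial step is the nonlocal term $-\int \nabla^k u\cdot\nabla^k F\,dx$, where I would write
\[
F(x) = u(x)(\psi*\rho)(x) - (\psi*(u\rho))(x), \qquad \rho:=e^h,
\]
and split $\nabla^k F$ into three pieces: the principal piece $(\psi*\rho)\nabla^k u$; the Leibniz remainder $\sum_{j=1}^k\binom{k}{j}\bigl((\nabla^j\psi)*\rho\bigr)\nabla^{k-j}u$; and the second contribution $\nabla^k(\psi*(u\rho))=(\nabla^k\psi)*(u\rho)$, in which I transfer all $k$ derivatives onto $\psi$. Since $\int_{\T^d}\rho\,dy=1$ and $\psi\geq\psi_m$, we have $(\psi*\rho)(x)\geq\psi_m$ pointwise, so the principal piece provides the clean dissipation $-\int(\psi*\rho)|\nabla^k u|^2\,dx \leq -\psi_m\|\nabla^k u\|_{L^2}^2$. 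The Leibniz remainder is bounded, using $\|(\nabla^j\psi)*\rho\|_{L^\infty}\leq \|\nabla^j\psi\|_{L^\infty}$ and $\psi\in W^{s+1,\infty}$, by $C\|u\|_{H^{k-1}}\|\nabla^k u\|_{L^2}$. The last piece is controlled by $\|\nabla^k\psi\|_{L^\infty}\|u\rho\|_{L^1}\,\|\nabla^k u\|_{L^2}\ls\|u\|_{L^2}\|\nabla^k u\|_{L^2}$, which via the $L^2$ bound already established contributes $C\|U_0\|_{L^2}^2$ after Young's inequality. The main obstacle is the endpoint case $k=s+1$: one cannot afford to differentiate $u\rho$ that many times and remain in $L^2$, and the regularity assumption $\psi\in W^{s+1,\infty}$ is precisely what permits me to transfer all $k$ derivatives onto $\psi$ in the second term. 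A final Young's inequality to absorb the small cross terms $\delta\|\nabla^k u\|_{L^2}^2$ into the dissipation (and relabelling $\psi_m$) then yields \eqref{est_hk}.
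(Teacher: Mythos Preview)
Your proposal is correct and follows essentially the same route as the paper. The paper also derives the $L^2$ bound from Lemma~\ref{lem_energy_hu} after pinching $e^h$ via smallness of $\|h\|_{L^\infty}$, and for the $H^k$ estimate it likewise applies $\nabla^k$, cancels the cross terms $\int\nabla^k h\,\nabla^k(\nabla\!\cdot u)+\int\nabla^k u\cdot\nabla^{k+1}h$, handles the convection terms with the Moser commutator estimate (Lemma~\ref{lem:bern}), and decomposes the nonlocal term by Leibniz into the dissipative piece $-\int(\psi*\rho)|\nabla^k u|^2\leq-\psi_m\|\nabla^k u\|_{L^2}^2$, the intermediate pieces $\sum_{1\le \ell\le k-1}(\nabla^\ell\psi)*\rho\cdot\nabla^{k-\ell}u$ giving $C\|u\|_{H^{k-1}}\|\nabla^k u\|_{L^2}$, and the top-order piece $\nabla^k\psi(x-y)(u(x)-u(y))$ giving $C\|U_0\|_{L^2}^2$ after Young; the only cosmetic difference is that you write the nonlocal force as $u(\psi*\rho)-\psi*(u\rho)$ and group the $\ell=k$ contribution with your Leibniz remainder rather than with the last piece, which is immaterial.
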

\begin{proof} {\bf $L^2$-estimate:} Choosing $\epsilon_1 > 0$ small enough such that
\[
\sup_{0 \leq t \leq T}\|e^{h(\cdot,t)} - 1\|_{L^\infty} \leq \frac12,
\]
we find from Lemma \ref{lem_energy_hu} that
$$\begin{aligned}
&\int_{\T^d} |u(x,t)|^2 \,dx + \int_{\T^d} h(x,t)^2 \,dx + \int_0^t\int_{\T^d \times \T^d} \psi(x-y)|u(x,s)-u(y,s)|^2 e^{h(x,s)+h(y,s)} \,dxdyds\cr
&\qquad \ls \int_{\T^d} |u_0(x)|^2 \,dx + \int_{\T^d} h_0(x)^2 \,dx,
\end{aligned}$$
and, in particular, the following inequality holds:
\bq\label{est_l2}
\sup_{0 \leq t \leq T} \|U(\cdot,t)\|_{L^2} \leq \|U_0\|_{L^2}.
\eq
{\bf $H^k$-estimates for $1 \leq k \leq s+1$:} Taking $\nabla^k$ to the system \eqref{main_eq2}, we obtain
$$\begin{aligned}
\frac12\frac{d}{dt}\int_{\T^d} |\nabla^k U|^2\,dx &= - \int_{\T^d} \nabla^k h \cdot \lt( \nabla^k(\nabla h \cdot u) + \nabla^k(\nabla \cdot u)\rt)dx - \int_{\T^d} \nabla^k u \cdot \lt(\nabla^k(u \cdot \nabla u) + \nabla^{k+1} h \rt)dx\cr
&\quad - \int_{\T^d} \lt(\nabla^k u \cdot \nabla^k\int_{\T^d} \psi(x-y) (u(x) - u(y)
)e^{h(y)}\,dy \rt)dx\cr
&=\frac12 \int_{\T^d} (\nabla \cdot u)|\nabla^k U|^2\,dx - \int_{\T^d} \nabla^k h \cdot [\nabla^k, u\cdot\nabla] h\,dx - \int_{\T^d} \nabla^k u \cdot [\nabla^k, u \cdot \nabla]u\,dx\cr
&\quad - \int_{\T^d} \lt(\nabla^k u \cdot \nabla^k\int_{\T^d} \psi(x-y) (u(x) - u(y)
)e^{h(y)}\,dy \rt)dx\cr
&=: \sum_{i=1}^4 I_i,
\end{aligned}$$
where $[\cdot,\cdot]$ denotes the commutator operator, i.e., $[A,B] = AB - BA$. Using Lemma \ref{lem:bern}, we estimate $I_i,i=1,2,3$ as follows.
$$\begin{aligned}
I_1 &\leq \|\nabla u\|_{L^\infty}\|\nabla^k U\|_{L^2}^2 \leq C\epsilon_1\|\nabla^k U\|_{L^2}^2,\cr
I_2 &\ls \|\nabla^k h\|_{L^2}\lt(\|\nabla^k u\|_{L^2}\|\nabla h\|_{L^\infty} + \|\nabla^k h\|_{L^2}\|\nabla u\|_{L^\infty} \rt) \ls \|\nabla U\|_{L^\infty}\|\nabla^k U\|_{L^2}^2,\cr
I_3 &\ls \|\nabla u\|_{L^\infty}\|\nabla^k u\|_{L^2}^2 \leq C\epsilon_1\|\nabla^k u\|_{L^2}^2.
\end{aligned}$$
For the estimate of $I_4$, we split it into three terms:
$$\begin{aligned}
I_4 &= - \sum_{1 \leq \ell \leq k-1}\binom{k-1}{\ell}\int_{\T^d \times \T^d} \nabla^\ell \psi(x-y) \nabla^{k - \ell}u(x)\cdot \nabla^k u(x)e^{h(y)}\,dydx \,(1 - \delta_{k,1})\cr
&\quad - \int_{\T^d \times \T^d} \nabla^k\psi(x-y) (u(x) - u(y))\cdot \nabla^k u(x) e^{h(y)}\,dydx\cr
&\quad - \int_{\T^d \times \T^d} \psi(x-y)|\nabla^k u|^2 e^{h(y)}\,dydx\cr
&=: I_4^1 + I_4^2 + I_4^3,
\end{aligned}$$
where $\delta_{i,j}$ denotes the Kronecker delta, i.e., $\delta_{i,j} = 1$ if $i=j$ and $\delta_{i,j} = 0$ otherwise. Here $I_4^3$ can be easily bounded from above by
\[
I_4^3 \leq - \psi_m\|\nabla^k u\|_{L^2}^2,
\]
due to \eqref{ass_psi}.
Using the $L^2$-estimate \eqref{est_l2}, we estimate $I_4^2$ as
$$\begin{aligned}
I_4^2 &\leq \|\nabla^k \psi\|_{L^\infty}\int_{\T^d \times \T^d} \lt(|u(x)| + |u(y)| \rt)|\nabla^k u(x)| e^{h(y)}\,dydx\cr
&\leq \|\nabla^k \psi\|_{L^\infty}\lt(\lt(\epsilon_2\int_{\T^d \times \T^d} |\nabla^k u(x)|^2 e^{h(y)}\,dydx \rt) + \frac{1}{4\epsilon_2}\lt(\int_{\T^d \times \T^d} \lt(|u(x)|^2 + |u(y)|^2 \rt)e^{h(y)}\,dydx\rt) \rt)\cr
&\leq C\epsilon_2 \|\nabla^k u\|_{L^2}^2 + \frac{C}{\epsilon_2} \|U_0\|_{L^2}^2,
\end{aligned}$$
where $\epsilon_2 > 0$ will be appropriately determined later. We finally estimate $I_4^1$ as 
\[
I_4^1 \ls \sum_{1 \leq \ell \leq k}\|\nabla^\ell \psi\|_{L^\infty}\|\nabla^{k-\ell} u\|_{L^2}\|\nabla^k u\|_{L^2}\,(1 - \delta_{k,1}) \ls \|u\|_{H^{k-1}}\|\nabla^k u\|_{L^2}.
\]
Collecting all the above estimates yields
\[
\frac{d}{dt}\|\nabla^k U\|_{L^2}^2 + 2\psi_m\|\nabla^k u\|_{L^2}^2 \leq C\epsilon_1\|\nabla^k U\|_{L^2}^2 + C\epsilon_2 \|\nabla^k u\|_{L^2}^2 + C\|u\|_{H^{k-1}}\|\nabla^k u\|_{L^2} + \frac{C}{\epsilon_2}\|U_0\|_{L^2}^2.
\]
We now choose $\epsilon_2 > 0$ small enough such that $2C\epsilon_2 < \psi_m$ and use the Young's inequality to get 
\[
C\|u\|_{H^{k-1}}\|\nabla^k u\|_{L^2} \leq C\|u\|_{H^{k-1}}^2 + \frac{\psi_m}{2}\|\nabla^k u\|_{L^2}^2.
\] 
Then we have
\[
\frac{d}{dt}\|\nabla^k U\|_{L^2}^2 + \psi_m\|\nabla^k u\|_{L^2}^2 \leq C\epsilon_1\|\nabla^k U\|_{L^2}^2 + C\|u\|_{H^{k-1}}^2 + C\|U_0\|_{L^2}^2.
\]
This completes the proof.
\end{proof}
\begin{remark}\label{rmk:small}Compared to \cite{STW}, where the global existence of strong solutions for the compressible Euler equations with a linear damping is studied, we have additional two terms on the right hand side of \eqref{est_hk} without small coefficients. They appear due to the nonlocal nonlinear interaction term and it seems impossible to remove them.
\end{remark}
Similarly as in \cite{STW}, we only get the dissipation rate for the velocity $u$ in the $H^s$-estimate of $U$. As briefly mentioned in Introduction, in order to have the dissipation rate for the density $\rho$, one can estimate $\|\pa_t U\|_{H^{s-1}}$. However, in our case, this strategy will produce rather complex terms because of the interaction term. Thus we employ the idea used in \cite{Choi16,CK16} of using the estimate of a crossing term to get the appropriate dissipation rate for $\rho$. 
\begin{lemma}\label{lem:aux2}Let $s > d/2 + 1$ and $T>0$ be given. Suppose $\sup_{0 \leq t \leq T}\|U(\cdot,t)\|_{H^s}^2 \leq \epsilon_1$ for sufficiently small $\epsilon_1 > 0$. Then, for $1 \leq k \leq s+1$, we have
$$\begin{aligned}
&\frac{d}{dt}\int_{\T^d} \lt(\nabla^{k-1} u \cdot \nabla^k h\rt) dx + \frac12\|\nabla^k h\|_{L^2}^2 \cr
&\quad \leq C\epsilon_1\|\nabla^k U\|_{L^2}^2 + C\epsilon_1 \|\nabla^{k-1}u\|_{L^2}^2 + \|\nabla^k u\|_{L^2}^2 + C\|\nabla u\|_{H^{k-2}}^2(1 - \delta_{k,1}) + C\|U_0\|_{L^2}^2,
\end{aligned}$$
where $C>0$ is independent of $t$.
\end{lemma}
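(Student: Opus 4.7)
The plan is to differentiate the crossing term $\int_{\T^d}\nabla^{k-1}u\cdot\nabla^k h\,dx$ in time, substitute $\pa_t u$ and $\pa_t h$ from \eqref{main_eq2}, and read off the dissipation $\|\nabla^k h\|_{L^2}^2$ generated by the pressure gradient. Writing $I[u,h](x):=\int_{\T^d}\psi(x-y)(u(x)-u(y))e^{h(y)}\,dy$, so that $\pa_t u=-u\cdot\nabla u-\nabla h-I[u,h]$ and $\pa_t h=-u\cdot\nabla h-\nabla\cdot u$, the pairing of $\nabla^{k-1}(-\nabla h)=-\nabla^k h$ against $\nabla^k h$ yields exactly $-\|\nabla^k h\|_{L^2}^2$, which will become the claimed $+\tfrac12\|\nabla^k h\|_{L^2}^2$ on the LHS after absorbing small fractions of $\|\nabla^k h\|_{L^2}^2$ collected from the other four pieces: the convective term $-\int\nabla^{k-1}(u\cdot\nabla u)\cdot\nabla^k h$, the transport term $-\int\nabla^{k-1}u\cdot\nabla^k(u\cdot\nabla h)$, the divergence term $-\int\nabla^{k-1}u\cdot\nabla^k(\nabla\cdot u)$, and the nonlocal term $-\int\nabla^{k-1}I[u,h]\cdot\nabla^k h$.

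For the two nonlinear transport pieces I would apply the Moser inequality (Lemma \ref{lem:bern}) together with the Sobolev embedding $H^s\hookrightarrow W^{1,\infty}$ (valid since $s>d/2+1$), so that $\|U\|_{H^s}^2\le\epsilon_1$ yields $\|u\|_{L^\infty},\|\nabla U\|_{L^\infty}\ls\sqrt{\epsilon_1}$. This gives $\|\nabla^{k-1}(u\cdot\nabla u)\|_{L^2}\ls\sqrt{\epsilon_1}\bigl(\|\nabla^k u\|_{L^2}+\|\nabla^{k-1}u\|_{L^2}\bigr)$, and, after one integration by parts to move a derivative from $\nabla^k(u\cdot\nabla h)$ onto $\nabla^{k-1}u$, an analogous bound handles the transport piece. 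Young's inequality then delivers the $C\epsilon_1\|\nabla^k U\|_{L^2}^2+C\epsilon_1\|\nabla^{k-1}u\|_{L^2}^2$ contributions together with an absorbable fraction of $\|\nabla^k h\|_{L^2}^2$. The pure divergence piece is the simplest: integration by parts rewrites $-\int\nabla^{k-1}u\cdot\nabla^k(\nabla\cdot u)\,dx$ as $+\|\nabla^{k-1}(\nabla\cdot u)\|_{L^2}^2\le\|\nabla^k u\|_{L^2}^2$, producing exactly the undamped $\|\nabla^k u\|_{L^2}^2$ term on the RHS.

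The nonlocal contribution requires the most care. I would expand $\nabla^{k-1}_x I[u,h]$ by Leibniz, noting that only $u(x)$ and $\psi(x-y)$ carry $x$-dependence: the principal piece $\nabla^{k-1}u(x)\int_{\T^d}\psi(x-y)e^{h(y)}\,dy$ is bounded in $L^2_x$ by $C\|\nabla^{k-1}u\|_{L^2}$ using mass conservation $\|e^{h(\cdot,t)}\|_{L^1}=1$; the mixed pieces involving $\nabla^\ell\psi\cdot\nabla^{k-1-\ell}u(x)$ with $1\le\ell\le k-2$ are controlled by $\|\nabla^\ell\psi\|_{L^\infty}\|\nabla^{k-1-\ell}u\|_{L^2}$ using $\psi\in W^{s+1,\infty}$; and the residual piece $\int_{\T^d}\nabla^{k-1}\psi(x-y)(u(x)-u(y))e^{h(y)}\,dy$ is bounded in $L^2_x$ by $C\bigl(\|u\|_{L^2}+\|u\|_{L^1}\bigr)\le C\|U_0\|_{L^2}$ via Young's convolution inequality together with the $L^2$-energy estimate of Lemma \ref{lem_energy_hu}. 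Pairing with $\nabla^k h$ and applying Young's inequality produces an absorbable $\tfrac18\|\nabla^k h\|_{L^2}^2$ plus $C\|\nabla^{k-1}u\|_{L^2}^2+C\|\nabla u\|_{H^{k-2}}^2(1-\delta_{k,1})+C\|U_0\|_{L^2}^2$; the standalone $C\|\nabla^{k-1}u\|_{L^2}^2$ then absorbs into $C\|\nabla u\|_{H^{k-2}}^2$ when $k\ge2$ and into $C\|U_0\|_{L^2}^2$ when $k=1$. The chief technical obstacle is precisely this nonlocal piece: because $k$ ranges up to $s+1$, applying $\nabla^{k-1}$ inside $I[u,h]$ must be done so that at most $s$ derivatives land on $U$, which is why the regularity hypothesis $\psi\in W^{s+1,\infty}(\T^d)$ and the $L^1$-mass conservation are used in an essential way. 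Collecting all four estimates and choosing $\epsilon_1$ small enough that the accumulated small multiple of $\|\nabla^k h\|_{L^2}^2$ sits below $\tfrac12$ yields the claimed inequality.
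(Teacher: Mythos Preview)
Your proposal is correct and follows essentially the same approach as the paper: the same time-differentiation of the crossing term, the same integration by parts to shift one derivative in the $\partial_t h$ contribution, the same Moser/Sobolev bounds on the two convective pieces, and the same Leibniz decomposition of the nonlocal term, with the $\ell=k-1$ piece controlled by the $L^2$-energy estimate and the remaining pieces folded into $\|\nabla u\|_{H^{k-2}}$. The only cosmetic differences are that the paper bounds the divergence contribution by Cauchy--Schwarz rather than recognizing it as $\|\nabla^{k-1}(\nabla\cdot u)\|_{L^2}^2$, and simply leaves the small $C\epsilon_1\|\nabla^k h\|_{L^2}^2$ inside $C\epsilon_1\|\nabla^k U\|_{L^2}^2$ on the right-hand side instead of absorbing it into the left.
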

\begin{proof} For $1 \leq k \leq s+1$, a straightforward computation gives
\[
\frac{d}{dt}\int_{\T^d} \lt(\nabla^{k-1} u \cdot \nabla^k h\rt) dx = \int_{\T^d} \lt(\nabla^{k-1} \pa_t u \cdot \nabla^k h\rt) dx + \int_{\T^d} \lt(\nabla^{k-1} u \cdot \nabla^k \pa_t h\rt) dx =: J_1 + J_2,
\]
where $J_2$ can be estimated as 
$$\begin{aligned}
J_2 &= \int_{\T^d} \nabla^k u \cdot \lt(\nabla^{k-1}\lt( \nabla h \cdot u + \nabla \cdot u \rt) \rt)dx\cr
&\leq \|\nabla^k u\|_{L^2}\lt(\|\nabla h\|_{L^\infty}\|\nabla^{k-1} u\|_{L^2} + \|\nabla^k h\|_{L^2}\|u\|_{L^\infty} \rt) + \|\nabla^k u\|_{L^2}^2\cr
&\leq C\epsilon_1\|\nabla^k U\|_{L^2}^2 + C\epsilon_1\|\nabla^{k-1}u\|_{L^2}^2 + \|\nabla^k u\|_{L^2}^2.
\end{aligned}$$
For the estimate of $J_1$, we split it into three terms:
\[
J_1 = - \int_{\T^d} \nabla^k h \cdot \lt(\nabla^{k-1}\lt(u \cdot \nabla u + \nabla h + \int_{\T^d} \psi(x-y)(u(x) - u(y))\rho(y)\,dy \rt) \rt)dx =: J_1^1 + J_1^2 + J_1^3.
\]
Here $J_1^2$ is simply $J_1^2 = -\|\nabla^k h\|_{L^2}^2$ and $J_1^1$ can be easily estimated as 
\[
J_1^1 \leq \|\nabla^k h\|_{L^2}\lt(\|\nabla^{k-1} u\|_{L^2}\|\nabla u\|_{L^\infty} + \|u\|_{L^\infty}\|\nabla^k u\|_{L^2} \rt) \leq C\epsilon_1\|\nabla^k U\|_{L^2}^2 + C\epsilon_1\|\nabla^{k-1}u\|_{L^2}^2.
\]
We next use the similar argument as in previous lemma to estimate $J_1^3$ as 
$$\begin{aligned}
J_1^3 &= \int_{\T^d \times \T^d} \nabla^k h \cdot \nabla^{k-1}\psi(x-y) (u(x) - u(y))\rho(y)\,dydx\cr
&\quad + \sum_{0 \leq \ell < k-1}\binom{k-1}{\ell} \int_{\T^d \times \T^d} \nabla^k h(x)\psi^\ell(x-y)\cdot \nabla^{k-1-\ell} u(x) \rho(y)\,dydx\,(1 - \delta_{k,1})\cr
&\leq C\|\nabla^k h\|_{L^2}\|U_0\|_{L^2} + C\|\nabla^k h\|_{L^2}\|\nabla u\|_{H^{k-2}}(1 - \delta_{k,1})\cr
&\leq \frac12\|\nabla^k h\|_{L^2}^2 + C\|\nabla u\|_{H^{k-2}}^2(1 - \delta_{k,1}) + C\|U_0\|_{L^2}^2.
\end{aligned}$$
Combining all the above estimates, we conclude the desired result.
\end{proof}
In order to handle the crossing term in Lemma \ref{lem:aux2}, we provide the estimate of $\|\nabla^{k-1}u\|_{L^2}$ in the lemma below.
\begin{lemma} \label{lem:aux3}Let $s > d/2 + 1$ and $T>0$ be given. Suppose $\sup_{0 \leq t \leq T}\|U(\cdot,t)\|_{H^s}^2 \leq \epsilon_1$ for sufficiently small $\epsilon_1 > 0$. Then, for $1 \leq k \leq s+1$, we have
\[
\frac{d}{dt}\int_{\T^d} |\nabla^{k-1}u|^2\,dx + \psi_m\|\nabla^{k-1} u\|_{L^2}^2 \leq C\|\nabla^{k-1} u\|_{L^2}^2 + \frac14\|\nabla^k h\|_{L^2}^2 + C\|U_0\|_{L^2}^2,
\]
where $C>0$ is independent of $t$.
\end{lemma}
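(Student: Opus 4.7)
Apply $\nabla^{k-1}$ to the velocity equation in \eqref{main_eq2} and take the $L^2$ inner product with $\nabla^{k-1}u$. This produces $\tfrac12\tfrac{d}{dt}\|\nabla^{k-1}u\|_{L^2}^2 = K_1 + K_2 + K_3$, where $K_1$ is the convection contribution, $K_2$ the pressure contribution, and $K_3$ the nonlocal alignment contribution. The nonlocal piece $K_3$ will supply the coercive dissipation $\psi_m\|\nabla^{k-1}u\|_{L^2}^2$ that appears on the left-hand side of the claim, while $K_1$ and $K_2$ generate only the terms permitted on the right-hand side.

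For the convection term $K_1 = -\int \nabla^{k-1}u\cdot\nabla^{k-1}(u\cdot\nabla u)\,dx$, integration by parts on the leading piece $u\cdot\nabla\nabla^{k-1}u$ turns it into $\tfrac12\int(\nabla\cdot u)|\nabla^{k-1}u|^2\,dx$, and the commutator form of Lemma \ref{lem:bern} controls $\|[\nabla^{k-1},u\cdot\nabla]u\|_{L^2}$ by $C\|\nabla u\|_{L^\infty}\|\nabla^{k-1}u\|_{L^2}$. Combined with Sobolev embedding $H^{s-1}\hookrightarrow L^\infty$ (valid because $s>d/2+1$) and the smallness hypothesis $\|U\|_{H^s}^2\le \epsilon_1$, this bounds $K_1$ by $C\|\nabla^{k-1}u\|_{L^2}^2$. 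For $K_2 = -\int \nabla^{k-1}u\cdot \nabla^k h\,dx$, Young's inequality with a carefully chosen constant yields $K_2 \ls \|\nabla^{k-1}u\|_{L^2}^2 + \tfrac18\|\nabla^k h\|_{L^2}^2$, chosen so that, after the factor of $2$ from the time derivative, the $h$-term produces the $\tfrac14$ asked for in the claim.

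The substantive work is $K_3$. Since $e^{h(y)}$ is $x$-independent, Leibniz gives
\[
\nabla_x^{k-1}\bigl[\psi(x-y)(u(x)-u(y))\bigr] = \sum_{\ell=0}^{k-2}\binom{k-1}{\ell}\nabla^\ell\psi(x-y)\nabla^{k-1-\ell}u(x) + \nabla^{k-1}\psi(x-y)(u(x)-u(y)).
\]
The $\ell=0$ piece, paired with $\nabla^{k-1}u(x)$, together with mass conservation $\int_{\T^d}e^{h(y)}dy = 1$ (the remark after Lemma \ref{lem:1}) and $\psi\ge \psi_m$, contributes $-\psi_m\|\nabla^{k-1}u\|_{L^2}^2$, supplying the dissipation on the left-hand side. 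The intermediate pieces $1\le \ell\le k-2$ (present only when $k\ge 3$) are controlled by $C\|\nabla^\ell\psi\|_{L^\infty}\|\nabla^{k-1}u\|_{L^2}\|\nabla^{k-1-\ell}u\|_{L^2}$. The final $\nabla^{k-1}\psi$ piece splits into a $u(x)$-term estimated by $C\|\nabla^{k-1}u\|_{L^2}\|u\|_{L^2}$ and a $u(y)$-term which I would bound via Cauchy--Schwarz in the probability measure $e^{h(y)}dy$: since $\int |u(y)|e^{h(y)}dy \le \bigl(\int |u|^2 e^h\,dy\bigr)^{1/2} \ls \|U_0\|_{L^2}$ by Lemma \ref{lem_energy_hu}, this term is at most $C\|\nabla^{k-1}u\|_{L^2}\|U_0\|_{L^2}$.

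To close the estimate I must absorb the residual lower-order norms $\|\nabla^j u\|_{L^2}$ for $1\le j\le k-2$ produced above. Gagliardo--Nirenberg interpolation gives $\|\nabla^j u\|_{L^2}^2 \ls \|\nabla^{k-1}u\|_{L^2}^2 + \|u\|_{L^2}^2$, and Lemma \ref{lem_energy_hu} gives $\|u\|_{L^2}^2 \ls \|U_0\|_{L^2}^2$; Young's inequality then buries every residual into $C\|\nabla^{k-1}u\|_{L^2}^2 + C\|U_0\|_{L^2}^2$, spending only part of the coercive $\psi_m\|\nabla^{k-1}u\|_{L^2}^2$ so that the full $\psi_m$ remains on the left-hand side after rescaling constants. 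The main obstacle is precisely this bookkeeping: unlike the local convection and pressure terms, the nonlocal kernel mixes derivatives of $\psi$ with strictly lower-order derivatives of $u$, and one has to verify that all such cross terms can be absorbed without generating uncontrolled high-order $u$-norms on the right-hand side. Once this is organized, the three pieces combine to give the asserted inequality.
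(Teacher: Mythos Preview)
Your proof is correct and follows the same route as the paper's: differentiate the velocity equation, pair with $\nabla^{k-1}u$, and estimate the convection, pressure, and nonlocal pieces exactly as in Lemma~\ref{lem:aux} (with $k$ replaced by $k-1$); the paper compresses all of this into one line by invoking that lemma, whereas you spell out the Leibniz decomposition of the kernel and use Gagliardo--Nirenberg interpolation to absorb the intermediate-order norms $\|\nabla^j u\|_{L^2}$ into $\|\nabla^{k-1}u\|_{L^2}^2+\|U_0\|_{L^2}^2$. One small remark: your worry about ``spending only part of the coercive $\psi_m$'' is unnecessary here, because the right-hand side of the claimed inequality already contains a non-small $C\|\nabla^{k-1}u\|_{L^2}^2$, so the $\psi_m\|\nabla^{k-1}u\|_{L^2}^2$ on the left can simply be added to both sides at the end.
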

\begin{proof}Using the similar argument as in Lemma \ref{lem:aux}, we find
$$\begin{aligned}
\frac12\frac{d}{dt}\int_{\T^d} |\nabla^{k-1}u|^2\,dx &= -\int_{\T^d} \nabla^{k-1} u \cdot \lt(\nabla^{k-1}\lt(u \cdot \nabla u + \nabla h + \int_{\T^d} \psi(x-y)(u(x) - u(y))\rho(y)\,dy \rt) \rt)dx\cr
&\leq \|\nabla u\|_{L^\infty}\|\nabla^{k-1} u\|_{L^2}^2 + \|\nabla^{k-1}u\|_{L^2}\|\nabla^k h\|_{L^2} - \frac{\psi_m}{2}\|\nabla^{k-1} u\|_{L^2}^2 + C\|U_0\|_{L^2}^2\cr
&\leq C\|\nabla^{k-1} u\|_{L^2}^2 + \frac14\|\nabla^k h\|_{L^2}^2 - \frac{\psi_m}{2}\|\nabla^{k-1} u\|_{L^2}^2 + C\|U_0\|_{L^2}^2.
\end{aligned}$$
This completes the proof.
\end{proof}
\begin{remark}\label{rmk_rel} Note that 
\[
\int_{\T^d} |\nabla^k U|^2\,dx + \int_{\T^d} \nabla^{k-1} u \cdot \nabla^k h\,dx + \int_{\T^d} |\nabla^{k-1} u|^2\,dx \simeq \|\nabla^k h\|_{L^2}^2 + \|\nabla^{k-1} u\|_{H^1}^2,
\]
in the sense that there exists $c_0 > 0$ such that
$$\begin{aligned}
c_0^{-1}\lt(\|\nabla^k h\|_{L^2}^2 + \|\nabla^{k-1} u\|_{H^1}^2 \rt) &\leq \int_{\T^d} |\nabla^k U|^2\,dx + \int_{\T^d} \nabla^{k-1} u \cdot \nabla^k h\,dx + \int_{\T^d} |\nabla^{k-1} u|^2\,dx\cr
&\leq c_0\lt(\|\nabla^k h\|_{L^2}^2 + \|\nabla^{k-1} u\|_{H^1}^2 \rt).
\end{aligned}$$
\end{remark}
%%%%%%%%%%%%%%%%%%%%%%%%%%%%%%%%%%%%%%%%%%%%%%%%%%%%%%%%%%%%%%%%%%%%%%%%%%%%%%%%%5
%
%
%                        Global existence
%
%
%%%%%%%%%%%%%%%%%%%%%%%%%%%%%%%%%%%%%%%%%%%%%%%%%%%%%%%%%%%%%%%%%%%%%%%%%%%%%%%%%
\subsection{Proof of Theorem \ref{thm:ext}: global-in-time existence}
In this subsection, we provide the details of the proof of Theorem \ref{thm:ext}. Note that there are some terms on the right hand side of the estimate of $\|U\|_{H^s}$ whose coefficients cannot be small enough, as mentioned in Remark \ref{rmk:small}. Thus we need to combine the estimates in Lemmas \ref{lem:aux}, \ref{lem:aux2}, and \ref{lem:aux3} carefully to have a Gronwall type inequality for $\|U\|_{H^s}$.    
\begin{proposition}\label{prop:en} Let $s > d/2 + 1$ and $T>0$ be given. Suppose $\sup_{0 \leq t \leq T}\|U(\cdot,t)\|_{H^s}^2 \leq \epsilon_1$ for sufficiently small $\epsilon_1 > 0$. Then we have
\bq\label{est_glo}
\sup_{0 \leq t \leq T}\|U(\cdot,t)\|_{H^s} \leq C\|U_0\|_{H^s},
\eq
where $C$ is a positive constant independent of $t$.
\end{proposition}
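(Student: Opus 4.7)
The plan is to build a Lyapunov functional equivalent to $\|U(t)\|_{H^s}^2$ which, after differentiation, produces dissipation in both $u$ and $h$ at every derivative order, and then to close the estimate by Gr\"onwall. The three lemmas in this subsection are complementary: Lemma~\ref{lem:aux} yields dissipation for $\|\nabla^k u\|_{L^2}^2$ but nothing for the density; Lemma~\ref{lem:aux2} recovers the missing density dissipation $\frac12\|\nabla^k h\|_{L^2}^2$ by evolving the crossing term $\int_{\T^d}\nabla^{k-1}u\cdot\nabla^k h\,dx$, at the cost of re-introducing a low-order velocity term; and Lemma~\ref{lem:aux3} then supplies dissipation precisely for that low-order velocity term.

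Accordingly, for each $1\leq k\leq s$ I would set
$$
\mathcal{G}_k(t):=\|\nabla^k U\|_{L^2}^2+\eta\,\|\nabla^{k-1}u\|_{L^2}^2+\delta\int_{\T^d}\nabla^{k-1}u\cdot\nabla^k h\,dx,
$$
for parameters $0<\delta\ll\eta\ll 1$ to be fixed later, and then form
$$
\mathcal{E}_s(t):=\|U(t)\|_{L^2}^2+\sum_{k=1}^s \mathcal{G}_k(t).
$$
By Cauchy--Schwarz (essentially Remark~\ref{rmk_rel}), if $\delta$ is small enough the crossing term is dominated by the other two pieces, so $\mathcal{E}_s(t)$ is equivalent to $\|U(t)\|_{H^s}^2$ up to universal constants.

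Differentiating $\mathcal{E}_s$ and combining the three lemmas with weights $1$, $\delta$, $\eta$ at each level $k$, several non-dissipative right-hand contributions must be absorbed. The $C\epsilon_1\|\nabla^k U\|_{L^2}^2$ terms disappear once $\epsilon_1$ is small enough; the $\delta\|\nabla^k u\|_{L^2}^2$ coming from Lemma~\ref{lem:aux2} is eaten by $\psi_m\|\nabla^k u\|_{L^2}^2$ from Lemma~\ref{lem:aux} provided $\delta\leq \psi_m/2$; the $\frac\eta4\|\nabla^k h\|_{L^2}^2$ produced by Lemma~\ref{lem:aux3} is eaten by $\frac\delta2\|\nabla^k h\|_{L^2}^2$ once $\eta\ll\delta$; finally, the lower-order velocity terms $C\|u\|_{H^{k-1}}^2$ and $C\|\nabla u\|_{H^{k-2}}^2$ are controlled by combining the conservation-type bound $\|U\|_{L^2}\ls \|U_0\|_{L^2}$ with the dissipation at strictly lower orders $j<k$, which already sits on the left of the summed inequality. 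After this bookkeeping one arrives at
$$
\frac{d}{dt}\mathcal{E}_s(t)+c_*\,\mathcal{E}_s(t)\leq C\,\|U_0\|_{L^2}^2
$$
for positive constants $c_*,C$ independent of $t$, so Gr\"onwall gives $\mathcal{E}_s(t)\leq \mathcal{E}_s(0)\,e^{-c_* t}+(C/c_*)\,\|U_0\|_{L^2}^2\leq C'\|U_0\|_{H^s}^2$, which via the equivalence above is exactly \eqref{est_glo}.

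The delicate point is the ordered choice of the three small parameters: $\delta$, $\eta$, and $\epsilon_1$ must be fixed one after another, first $\delta<\psi_m/2$, then $\eta$ much smaller than $\delta$, and only finally $\epsilon_1$ chosen small relative to the resulting constants, so that every non-dissipative contribution on the right-hand side either cancels part of the left-hand dissipation or collapses into the harmless forcing $C\|U_0\|_{L^2}^2$. As noted in Remark~\ref{rmk:small}, the absence of small prefactors on these terms is precisely the new difficulty caused by the nonlocal alignment interaction, and this tuning is what overcomes it; the fact that the forcing does not decay in $t$ is inconsequential, since the coercive term $c_*\mathcal{E}_s$ keeps $\mathcal{E}_s$ uniformly bounded.
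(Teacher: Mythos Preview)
Your Lyapunov strategy is the right spirit, but as written it does not close. The crucial step is your claim that the lower-order velocity contributions $C\|u\|_{H^{k-1}}^2$ coming from Lemma~\ref{lem:aux} are absorbed by the dissipation already present at orders $j<k$. They are not: in your functional $\mathcal{G}_k$ the estimate of Lemma~\ref{lem:aux} enters with weight $1$, so $C\|u\|_{H^{k-1}}^2$ carries the full constant $C$ (this is precisely the point of Remark~\ref{rmk:small}). After summing $\mathcal{G}_k$ over $1\le k\le s$, the total coefficient multiplying $\|\nabla^j u\|_{L^2}^2$ on the right-hand side is of order $C(s-j)$, while the only dissipation available for that same term on the left is $(\psi_m-\delta)+\psi_m\eta\approx\psi_m$ (from Lemma~\ref{lem:aux} at level $j$ together with Lemma~\ref{lem:aux3} at level $j+1$). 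For generic $s$ one has $C(s-j)>\psi_m$, the absorption fails, and you never reach the inequality $\frac{d}{dt}\mathcal{E}_s+c_*\mathcal{E}_s\le C\|U_0\|_{L^2}^2$. Your tuning of $\delta,\eta,\epsilon_1$ handles every other cross-term, but none of these small parameters sits in front of this one. (Incidentally, you first write $0<\delta\ll\eta\ll 1$ and later require $\eta\ll\delta$; the latter is what you actually need.)

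The paper sidesteps this by arguing by induction on the order rather than summing all levels at once. Assuming $\sup_{0\le t\le T}\|U\|_{H^m}\le C\|U_0\|_{H^m}$ is already known, it combines Lemmas~\ref{lem:aux}--\ref{lem:aux3} only at the single level $k=m+1$; then every lower-order right-hand term---$C\|u\|_{H^m}^2$, $C\delta_0\|\nabla u\|_{H^{m-1}}^2$, $C\delta_1\|\nabla^m u\|_{L^2}^2$---is bounded outright by $C\|U_0\|_{H^m}^2$ via the induction hypothesis and becomes harmless forcing, so no absorption by dissipation is needed. A single-level Gr\"onwall (using Remark~\ref{rmk_rel}) then yields $\|\nabla^{m+1}h\|_{L^2}^2+\|\nabla^m u\|_{H^1}^2\le C\|U_0\|_{H^{m+1}}^2$, closing the induction. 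Your all-at-once scheme can be repaired by the same mechanism in disguise: replace $\sum_k\mathcal{G}_k$ by $\sum_k\mu^k\mathcal{G}_k$ for a small $\mu>0$, so that the level-$k$ bad term $C\mu^k\|u\|_{H^{k-1}}^2$ is genuinely small relative to the level-$j$ dissipation $\mu^j\psi_m\|\nabla^j u\|_{L^2}^2$ for each $j<k$.
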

\begin{proof}For the proof, we use the induction argument in $s$. It follows from Lemma \ref{lem:aux} that the inequality \eqref{est_glo} holds for $s = 0$. Let us assume that \eqref{est_glo} holds for any $m < s$. Then we now combine the inequalities in Lemmas \ref{lem:aux}, \ref{lem:aux2}, and \ref{lem:aux3} with $k=m+1 \leq s$ to find
$$\begin{aligned}
&\frac{d}{dt}\lt(\int_{\T^d} |\nabla^{m+1} U|^2\,dx + \delta_0 \int_{\T^d} \lt(\nabla^m u \cdot \nabla^{m+1}h\rt) dx + \delta_1 \int_{\T^d} |\nabla^m u|^2\,dx \rt) \cr
&\quad \leq -\psi_m\|\nabla^{m+1} u\|_{L^2}^2  + C\epsilon_1\|\nabla^{m+1} U\|_{L^2}^2 + C\|u\|_{H^m}^2 + C\|U_0\|_{L^2}^2\cr
&\qquad - \frac{\delta_0}{2}\|\nabla^{m+1} h\|_{L^2}^2 + C\epsilon_1\delta_0 \|\nabla^m u\|_{L^2}^2 + \delta_0\|\nabla^{m+1} u\|_{L^2}^2 + C\delta_0\|\nabla u\|_{H^{m-1}}^2(1 - \delta_{m,0})\cr
&\qquad - \psi_m\delta_1\|\nabla^m u\|_{L^2}^2 + C\delta_1\|\nabla^m u\|_{L^2}^2 + \frac{\delta_1}{4}\|\nabla^{m+1} h\|_{L^2}^2\cr
&\quad \leq -(\psi_m - \delta_0)\|\nabla^{m+1} u\|_{L^2}^2 - \frac12\lt(\delta_0 - \frac{\delta_1}{2} \rt)\|\nabla^{m+1}h\|_{L^2}^2   - \psi_m\delta_1\|\nabla^m u\|_{L^2}^2 \cr
&\qquad  + C\epsilon_1\|\nabla^{m+1} U\|_{L^2}^2 + C\|U_0\|_{H^m}^2,
\end{aligned}$$
where we used the fact that the inequality \eqref{est_glo} holds for $m$. Then choosing $\delta_0, \delta_1 > 0$ such that $\psi_m > \delta_0 > \delta_1/2$ gives that the right hand side can be bounded from above by
\[
-\lt(\min\lt\{\psi_m - \delta_0, \frac12\lt(\delta_0 - \frac{\delta_1}{2} \rt) \rt\} - C\epsilon_1\rt)\|\nabla^{m+1} U\|_{L^2}^2 - \psi_m\delta_1\|\nabla^m u\|_{L^2}^2   + C\|U_0\|_{H^m}^2.
\]
This together with the relation in Remark \ref{rmk_rel} concludes that the inequality \eqref{est_glo} holds for $m+1$, and this completes the proof.
\end{proof}
We are now in a position to prove Theorem \ref{thm:ext}.

\begin{proof}[Proof of Theorem \ref{thm:ext}] The local existence is obtained in Lemma \ref{lem:local}. Choose a positive constant 
\[
M := \min \{M_1,\epsilon_1^2\},
\]
where $M_1$ and $\epsilon_1$ are given in Lemma \ref{lem:local} and Proposition \ref{prop:en}, respectively. Furthermore, we choose the initial data such that
\bq\label{ini-con}
\|U_0\|_{H^{s+1}}^2 \leq \frac{M}{2(1+C)},
\eq
where $C$ is a positive constant appeared in Proposition \ref{prop:en}. Let us define the lifespan of the solutions for the system \eqref{main_eq2}-\eqref{ini-main_eq2} as
\[
S := \lt\{ t \geq 0 : \sup_{0 \leq s \leq t} \|U(\cdot,s)\|_{H^{s+1}}^2 \leq M \rt\}.
\]
Since the initial data satisfy \eqref{ini-con}, $S \neq \phi$. Suppose $\mathcal{T}:=\sup S$ is finite, then we find
\[
M=\sup_{0\leq t\leq \mathcal{T}}\|U(\cdot,t)\|_{H^{s+1}}^2 \leq C\|U_0\|_{H^{s+1}}^2 \leq \frac{CM}{2(1+C)} \leq \frac M2 < M,
\]
due to $\|U(\cdot,t)\|_{H^{s+1}}^2 \leq \epsilon_1^2$ for $t \in [0,\mathcal{T}]$. This is a contradiction, hence $\mathcal{T} = \infty$, and this completes the proof.
\end{proof}
%%%%%%%%%%%%%%%%%%%%%%%%%%%%%%%%%%%%%%%%%%%%%%%%%%%%%%%%%%%%%%%%%%%%%%%%%%%%%%%%%5
%
%
%                        Section: Large-time behavior
%
%
%%%%%%%%%%%%%%%%%%%%%%%%%%%%%%%%%%%%%%%%%%%%%%%%%%%%%%%%%%%%%%%%%%%%%%%%%%%%%%%%%
\section{Large-time behavior}\label{sec_lt}
In this part, we study the large-time behavior of global classical solutions to the system \eqref{CEN}-\eqref{ini-CEN}. 

For this, we first introduce a type of temporary Lyapunov function $\mathcal{E}(t)$ and its corresponding dissipation $\mathcal{D}(t)$:
\[
\mathcal{E}(t) := \frac12\int_{\T^d} \rho|u - m_c|^2 \,dx + \int_{\T^d} \rho \int_1^\rho \frac{z - 1}{z^2} \,dzdx,
\]
and
\[
\mathcal{D}(t) := \frac12\int_{\T^d \times \T^d} \psi(x-y)|u(x)-u(y)|^2 \rho(x)\rho(y) \,dx dy.
\]
\begin{lemma}\label{lem_ef1}Let $(\rho ,u)$ be a global classical solution to the system \eqref{CEN}-\eqref{ini-CEN}. Then we have
\[
\frac{d}{dt} \mathcal{E}(t) + \mathcal{D}(t) = 0,
\]
for $t \geq 0$.
\end{lemma}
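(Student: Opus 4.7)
The plan is to show that $\mathcal{E}(t)$ differs from the standard physical energy $\frac{1}{2}\int_{\T^d} \rho|u|^2\,dx + \int_{\T^d}\rho\ln\rho\,dx$ only by quantities that are constant in time, so that the desired identity follows directly from Lemma \ref{lem:1} together with Lemma \ref{lem:2}. This is a pure bookkeeping argument once momentum conservation is in hand.

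First I would expand the kinetic part. Using $|u-m_c|^2 = |u|^2 - 2 u\cdot m_c + |m_c|^2$ and the fact that $\rho$ is a probability density, a direct computation gives
\[
\frac{1}{2}\int_{\T^d}\rho|u-m_c|^2\,dx \;=\; \frac{1}{2}\int_{\T^d}\rho|u|^2\,dx \;-\; m_c\cdot\!\int_{\T^d}\rho u\,dx \;+\; \frac{|m_c|^2}{2} \;=\; \frac{1}{2}\int_{\T^d}\rho|u|^2\,dx \;-\; \frac{|m_c|^2}{2},
\]
where in the last step I used the definition $m_c=\int_{\T^d}\rho u\,dx$. By the first identity in Lemma \ref{lem:1}, $m_c$ is conserved in time, so $|m_c|^2/2$ contributes nothing to $\tfrac{d}{dt}$.

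Next I would handle the entropy piece: by Lemma \ref{lem:2} one has the identity
\[
\frac{d}{dt}\int_{\T^d}\rho\int_1^\rho\frac{z-1}{z^2}\,dz\,dx \;=\; \frac{d}{dt}\int_{\T^d}\rho\ln\rho\,dx.
\]
Combining the two observations yields
\[
\frac{d}{dt}\mathcal{E}(t) \;=\; \frac{d}{dt}\!\left(\frac{1}{2}\int_{\T^d}\rho|u|^2\,dx + \int_{\T^d}\rho\ln\rho\,dx\right),
\]
and then the second identity of Lemma \ref{lem:1} produces the dissipation $\mathcal{D}(t)$ (the factor $1/2$ in the definition of $\mathcal{D}$ being absorbed by the symmetrization in $x\leftrightarrow y$ already used to derive that lemma).

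There is no substantive obstacle here; the only point requiring attention is keeping track of the symmetrization constant so that the dissipation on the right-hand side matches the definition of $\mathcal{D}(t)$ exactly, rather than a multiple of it. Everything else is a linear rearrangement that uses conservation of mass, conservation of momentum, and the pointwise identity $\rho\ln\rho+1-\rho=\rho\int_1^{\rho}\frac{z-1}{z^2}\,dz$ already exploited in Lemma \ref{lem:2}.
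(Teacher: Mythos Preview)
Your proof is correct and essentially mirrors the paper's own argument: both reduce to the standard energy identity plus Lemma~\ref{lem:2}, the only cosmetic difference being that you explicitly strip off the constant $\tfrac12|m_c|^2$ and invoke Lemma~\ref{lem:1}, whereas the paper redoes the direct computation with $u-m_c$ in place and then symmetrizes. Your caution about the factor $1/2$ is well-placed, since the second identity of Lemma~\ref{lem:1} as stated is in fact missing that factor from the symmetrization; the correct dissipation is exactly $\mathcal{D}(t)$, as you need.
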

\begin{proof} A straightforward computation together with using the symmetry assumption on $\psi$ gives
\begin{align*}
\begin{aligned}
\frac{d}{dt}\lt(\frac12\int_{\T^d} \rho|u-m_c|^2 dx + \int_{\T^d} \rho\ln\rho\, dx \rt)
&= -\int_{\T^d} \psi(x-y)(u(x) - u(y)) \cdot (u(x)-m_c) \rho(x)\rho(y) dxdy\cr
&= -\frac12\int_{\T^d \times \T^d} \psi(x-y)|u(x)-u(y)|^2 \rho(x)\rho(y) dx dy.
\end{aligned}
\end{align*}
We now use the equality in Lemma \ref{lem:2} to complete the proof.
\end{proof}
It follows from Lemma \ref{lem_ef1} that 
\[
\frac{d}{dt}\mathcal{E}(t) + \psi_m\int_{\T^d} \rho|u - m_c|^2 \,dx \leq 0,
\]
where we used the lower bound assumption on $\psi$ \eqref{ass_psi}. However, the above estimate is not enough to provide the desired large-time behavior estimate. To handle this problem, we use the Bogovskii operator introduced in Section \ref{sec_bogo} and introduce modified temporary Lyapunov function $\mathcal{E}^\sigma$ and its dissipation $D^\sigma$ as follows:
\[
\mathcal{E}^\sigma(t) := \mathcal{E}(t) + \sigma\int_{\T^d}\rho(u-m_c)\mathcal{B}[\rho - 1] \,dx,
\]
and
\begin{align*}
\begin{aligned}
\mathcal{D}^\sigma(t) &:= \mathcal{D}(t) + \sigma\lt( \int_{\T^d} (\rho u \otimes u) : \nabla \mathcal{B}[\rho - 1] + (\rho - 1)^2\, dx\rt)\cr
&\quad -\sigma\lt( \int_{\T^d} \rho(u-m_c) \mathcal{B}[\nabla \cdot (\rho u)] + \pa_t(\rho m_c) \mathcal{B}[\rho - 1]\,dx\rt)\cr
&\quad -\sigma\int_{\T^d \times \T^d} \psi(x-y)(u(x) - u(y)) \cdot \mathcal{B}[\rho - 1] \rho(x)\rho(y)\,dx dy.
\end{aligned}
\end{align*}
In the following two lemmas, we provide that the above newly defined functions $\mathcal{E}^\sigma$ and $\mathcal{D}^\sigma$ are good approximations of our Lyapunov function $\mathcal{F}$ for $\sigma > 0$ small enough.
\begin{lemma} Let $(\rho ,u)$ be a global classical solution to the system \eqref{CEN}-\eqref{ini-CEN}. Suppose $\rho \in [0,\bar\rho]$. Then there exist positive constants $c_3, c_4$, which are independent of $t$, such that 
\[
c_3\mathcal{F}(t) \leq \mathcal{E}^\sigma(t) \leq c_4 \mathcal{F}(t), \quad t \geq 0,
\]
for $\sigma >0$ small enough.
\end{lemma}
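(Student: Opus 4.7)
The plan is to show that $\mathcal{E}^\sigma$ is a small perturbation of $\mathcal{E}$, and that $\mathcal{E}$ itself is already equivalent to $\mathcal{F}$ up to multiplicative constants. The two building blocks are Lemma \ref{lem:3} (to handle the relative-entropy piece of $\mathcal{E}$) and the Bogovskii estimate from Lemma \ref{oper-b-lem}(1) (to handle the crossing term).

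\textbf{Step 1: Equivalence of $\mathcal{E}$ and $\mathcal{F}$.} Since $\rho \in [0,\bar\rho]$, Lemma \ref{lem:3} yields
\[
c_1 \int_{\T^d} (\rho-1)^2\, dx \;\leq\; \int_{\T^d} \rho \int_1^\rho \frac{z-1}{z^2}\, dz\, dx \;\leq\; c_2 \int_{\T^d} (\rho-1)^2\, dx,
\]
so combining this with the coefficient $1/2$ in front of $\int \rho |u-m_c|^2$ gives immediately
\[
\min\!\left\{\tfrac{1}{2},c_1\right\} \mathcal{F}(t) \;\leq\; \mathcal{E}(t) \;\leq\; \max\!\left\{\tfrac{1}{2},c_2\right\} \mathcal{F}(t).
\]

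\textbf{Step 2: Control of the Bogovskii crossing term.} Applying Cauchy--Schwarz with weight $\rho$ and using $\|\sqrt{\rho}\|_{L^\infty} \leq \sqrt{\bar\rho}$,
\[
\left| \int_{\T^d} \rho (u-m_c) \cdot \mathcal{B}[\rho-1]\, dx \right| \;\leq\; \sqrt{\bar\rho}\left(\int_{\T^d}\rho|u-m_c|^2\,dx\right)^{\!1/2} \|\mathcal{B}[\rho-1]\|_{L^2}.
\]
Since $\int_{\T^d}(\rho-1)\,dx = 0$ (the total mass equals that of the constant $1$ on the torus), $\rho-1 \in L^2_\#(\T^d)$ and Lemma \ref{oper-b-lem}(1) gives $\|\mathcal{B}[\rho-1]\|_{L^2} \leq C\|\rho-1\|_{L^2}$. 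Young's inequality then yields a constant $K = K(\bar\rho, C)$ such that
\[
\left| \int_{\T^d} \rho (u-m_c) \cdot \mathcal{B}[\rho-1]\, dx \right| \;\leq\; K\,\mathcal{F}(t).
\]

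\textbf{Step 3: Combine and choose $\sigma$.} The previous two steps give
\[
\left(\min\!\left\{\tfrac{1}{2},c_1\right\} - \sigma K\right) \mathcal{F}(t) \;\leq\; \mathcal{E}^\sigma(t) \;\leq\; \left(\max\!\left\{\tfrac{1}{2},c_2\right\} + \sigma K\right) \mathcal{F}(t).
\]
Choosing $\sigma < \min\{1/2,c_1\}/(2K)$ makes both coefficients strictly positive and independent of $t$, providing the constants $c_3$ and $c_4$ and completing the proof.

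There is no real obstacle here: the only subtlety is checking that $\rho-1$ is mean-zero, which is immediate from the conservation of mass $\int\rho\,dx = 1$ stated in the introduction, so the Bogovskii operator is indeed applicable.
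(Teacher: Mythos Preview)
Your proof is correct and follows essentially the same route as the paper: both use Lemma~\ref{lem:3} to replace the relative-entropy piece of $\mathcal{E}$ by $\|\rho-1\|_{L^2}^2$, and both control the Bogovskii crossing term via Cauchy--Schwarz/Young together with the $L^2$ bound $\|\mathcal{B}[\rho-1]\|_{L^2}\lesssim\|\rho-1\|_{L^2}$ from Lemma~\ref{oper-b-lem}. The only difference is packaging---you first establish $\mathcal{E}\simeq\mathcal{F}$ and then perturb, whereas the paper bounds $\mathcal{E}^\sigma$ from below in one step---but the ingredients and the smallness mechanism for $\sigma$ are identical.
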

\begin{proof}Note that
\[
\sigma\lt|\int_{\T^d}\rho(u-m_c)\mathcal{B}[\rho - 1] \,dx\rt| \leq \frac\sigma2\int_{\T^d} \rho|u-m_c|^2 \,dx + \frac{C\sigma \bar\rho}{2}\int_{\T^d} (\rho - 1)^2 \,dx.
\]
Then this and together with the estimate in Lemma \ref{lem:3} yield 
\[
\mathcal{E}^\sigma(t) \ge \frac{1 - \sigma}{2}\int_{\T^d} \rho|u-m_c|^2 \,dx + \lt( c_1 - \frac{C\sigma \bar\rho}{2}\rt)\int_{\T^d} (\rho - 1)^2 \,dx 
\]
By choosing $\sigma > 0$ small enough, we find that there exists a $c_3 > 0$ such that $c_3\mathcal{F}(t) \leq \mathcal{E}^\sigma(t)$ for all $t \geq 0$. The estimate for upper bound of $\mathcal{E}^\sigma(t)$ is clearly obtained. 
\end{proof}
\begin{lemma}Let $(\rho ,u)$ be a global classical solution to the system \eqref{CEN}-\eqref{ini-CEN}. Suppose $\rho \in [0,\bar\rho]$. Then there exist positive constants $c_5,c_6$, which are independent of $t$, such that
\[
c_5 \mathcal{F}(t) \leq \mathcal{D}^\sigma(t) \leq c_6 \mathcal{F}(t), \quad t \geq 0.
\]
for $\sigma > 0$ small enough.
\end{lemma}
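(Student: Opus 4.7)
The plan is to exploit the key identity
\[
\int_{\T^d \times \T^d} |u(x)-u(y)|^2 \rho(x)\rho(y)\,dxdy = 2\int_{\T^d} \rho|u-m_c|^2\,dx,
\]
which follows from $\int \rho\,dx = 1$ and $\int \rho u\,dx = m_c$. Together with $\psi \geq \psi_m$, this yields the lower bound $\mathcal{D}(t) \geq \psi_m \int_{\T^d} \rho|u-m_c|^2\,dx$, and together with $\|\psi\|_{L^\infty}<\infty$ the matching upper bound $\mathcal{D}(t) \leq \|\psi\|_{L^\infty}\int_{\T^d} \rho|u-m_c|^2\,dx$. Thus the unperturbed part $\mathcal{D}(t)+\sigma\int(\rho-1)^2\,dx$ already controls $\mathcal{F}(t)$ both from above and below. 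The remaining task is to bound the four correction terms in $\mathcal{D}^\sigma-\mathcal{D}-\sigma\int(\rho-1)^2\,dx$ by quantities of the form $C(\sigma+|m_c|)\mathcal{F}(t)$, which, for $\sigma$ and $|m_c(0)|$ small, can be absorbed.

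For the upper bound I would estimate each correction term by Cauchy--Schwarz, using $\rho,u\in L^\infty$ and the two parts of Lemma \ref{oper-b-lem}: $\|\nabla\mathcal{B}[\rho-1]\|_{L^2}\leq C\|\rho-1\|_{L^2}$ from part (1), and, since $m_c$ is a constant vector, $\nabla\cdot(\rho u)=\nabla\cdot(\rho u-m_c)$, so part (2) gives $\|\mathcal{B}[\nabla\cdot(\rho u)]\|_{L^2}\leq C\|\rho u-m_c\|_{L^2}$. Writing $\rho u-m_c=\rho(u-m_c)+(\rho-1)m_c$, the latter is bounded by $C(1+|m_c|)\sqrt{\mathcal{F}(t)}$. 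All four terms then obey $|\cdot|\leq C\mathcal{F}(t)$, which together with the controls on $\mathcal{D}$ and $\sigma\int(\rho-1)^2$ yields $\mathcal{D}^\sigma(t)\leq c_6\mathcal{F}(t)$.

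For the lower bound, the main obstacle is preventing the correction terms from eating into $\mathcal{D}+\sigma\int(\rho-1)^2$. The crucial structural observation is that $\int_{\T^d}(m_c\otimes m_c):\nabla\mathcal{B}[\rho-1]\,dx=0$ by periodicity, so after decomposing $\rho u\otimes u=\rho(u-m_c)\otimes(u-m_c)+\rho(u-m_c)\otimes m_c+\rho m_c\otimes(u-m_c)+(\rho-1)m_c\otimes m_c$, every surviving piece of term (a) carries either a factor $\sqrt{\mathcal{F}}$ from $\sqrt{\rho}(u-m_c)$ or an extra $|m_c|$, giving a bound of the form $C(1+|m_c|)\mathcal{F}$. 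For term (c), since $m_c$ is time-independent (Lemma \ref{lem:1}), $\partial_t(\rho m_c)=-m_c\nabla\cdot(\rho u)$, and integration by parts plus $\int m_c\cdot\nabla(m_c\cdot\mathcal{B}[\rho-1])\,dx=0$ reduces it to $\int(\rho u-m_c)\cdot\nabla(m_c\cdot\mathcal{B}[\rho-1])\,dx$, which is $\leq C|m_c|(1+|m_c|)\mathcal{F}$. Term (b) is handled by the rewrite $\mathcal{B}[\nabla\cdot(\rho u)]=\mathcal{B}[\nabla\cdot(\rho u-m_c)]$ as above. For term (d), the Cauchy--Schwarz inequality in the measure $\psi(x-y)\rho(x)\rho(y)\,dxdy$ yields a bound by $C\sqrt{\mathcal{D}(t)}\,\|\mathcal{B}[\rho-1]\|_{L^2}\leq \epsilon\mathcal{D}(t)+C_\epsilon\|\rho-1\|_{L^2}^2$.

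Collecting these estimates gives
\[
\mathcal{D}^\sigma(t)\geq (1-\sigma\epsilon)\mathcal{D}(t)+\sigma(1-C_\epsilon\sigma-C|m_c|)\int_{\T^d}(\rho-1)^2\,dx-\sigma C(|m_c|+\sigma)\int_{\T^d}\rho|u-m_c|^2\,dx,
\]
and using $\mathcal{D}(t)\geq\psi_m\int\rho|u-m_c|^2\,dx$ together with the smallness of $\sigma$, $\epsilon$, and $|m_c(0)|=|m_c(t)|$, the right-hand side is $\geq c_5\mathcal{F}(t)$ for some $c_5>0$. The main obstacle is the careful book-keeping of which error terms carry a small prefactor ($\sigma$ or $|m_c|$) and which do not; the decomposition against $m_c$ and the periodicity cancellations described above are essential so that no $O(1)$ remainder survives.
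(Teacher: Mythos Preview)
Your approach is correct and essentially the same as the paper's: both decompose the four $\sigma$-correction terms around $m_c$, invoke Lemma~\ref{oper-b-lem} for the $\mathcal{B}$-operator, and absorb the resulting errors using $\mathcal{D}(t)\geq \psi_m\int\rho|u-m_c|^2\,dx$ together with the explicit $\sigma\int(\rho-1)^2\,dx$. One genuine variation: for the nonlocal term (your term (d), the paper's $I_5$) you apply Cauchy--Schwarz in the measure $\psi(x-y)\rho(x)\rho(y)\,dxdy$ to compare directly with $\sqrt{\mathcal{D}(t)}$, whereas the paper splits $u(x)-u(y)=(u(x)-m_c)-(u(y)-m_c)$ and bounds each piece by $\int\rho|u-m_c|^2$; your route is slightly cleaner and avoids repeating the $m_c$-decomposition.

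Your final collected inequality has bookkeeping slips that you should fix. The velocity error is $O(\sigma)$, not $O(\sigma(|m_c|+\sigma))$: the piece $\rho(u-m_c)\otimes(u-m_c):\nabla\mathcal{B}[\rho-1]$ in (a) and the principal part of (b) each contribute $C\sigma\int\rho|u-m_c|^2$ with an $O(1)$ constant $C$ (depending on $\bar\rho,\|u\|_{L^\infty}$). Likewise, the $(\rho-1)^2$-error from term (d) is $\sigma C_\epsilon$, not $\sigma^2 C_\epsilon$. Neither slip is fatal: after substituting $\mathcal{D}\geq\psi_m\int\rho|u-m_c|^2$, the velocity coefficient $(1-\sigma\epsilon)\psi_m-C\sigma$ is positive for $\sigma$ small; and for the density coefficient one fixes $\epsilon$ \emph{large enough} (not $\epsilon\to 0$) so that $C_\epsilon<\tfrac12$, and chooses the auxiliary Young parameters in (a)--(c) so that their $(\rho-1)^2$-contributions are also $<\tfrac14$, leaving $\sigma(1-\tfrac34-C|m_c|^2)>0$ for $|m_c(0)|$ small. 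The paper achieves exactly this balance by using $\sigma^{1/2}$ uniformly as the Young-splitting parameter, which puts all velocity errors at order $\sigma^{1/2}$ and all density errors at order $\sigma^{3/2}$.
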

\begin{proof} We estimate the each terms in $\mathcal{D}^\sigma(t) - \mathcal{D}(t)$. We set
\begin{align*}
\begin{aligned}
\sum_{i=1}^5 I_i &:= \sigma \int_{\T^d} (\rho u \otimes u) : \nabla \mathcal{B}[\rho - 1] + (\rho - 1)^2 \,dx  \cr
&\quad -\sigma \int_{\T^d} \rho(u-m_c) \cdot \mathcal{B}[\nabla \cdot (\rho u)] + \pa_t(\rho m_c) \cdot \mathcal{B}[\rho - 1]\,dx\cr
&\quad -\sigma\int_{\T^d \times \T^d} \psi(x-y)(u(x) - u(y)) \cdot \mathcal{B}[\rho - 1] \rho(x)\rho(y) \,dx dy.
\end{aligned}
\end{align*}
By adding and subtracting, we first find that
\begin{align*}
\begin{aligned}
\sigma\lt|\int_{\T^d} (\rho u \otimes u) : \nabla \mathcal{B}[\rho - 1] \,dx\rt| &= \sigma \int_{\T^d} \rho (( u - m_c) \otimes u) : \nabla
\mathcal{B}[\rho - 1]\,dx \cr
&\quad + \sigma \int_{\T^d} \rho  (m_c \otimes ( u -
m_c)) : \nabla \mathcal{B}[\rho - 1]\,dx \cr
&\quad  + \sigma \int_{\T^d} (\rho - \rho_c) (m_c \otimes m_c) : \nabla \mathcal{B}[\rho - 1]\,dx \cr
 & =: I_1^1 + I_1^2 +  I_1^3.
 \end{aligned}
\end{align*}
Here $I_1^i,i=1,2,3$ are estimated as follows.
\begin{align*}
\begin{aligned}
I_1^1 &\leq \frac{\sigma^{1/2}\bar \rho \|u\|_{L^\infty}}{2}
\int_{\T^d} \rho| u  - m_c|^2\,dx + C\sigma^{3/2}\int_{\T^d} (\rho - 1)^2 \,dx, \cr
I_1^2 &\leq \frac{\sigma^{1/2}\bar \rho |m_c(0)|^2}{2}
\int_{\T^d} \rho| u  - m_c|^2\,dx + C\sigma^{3/2}\int_{\T^d} (\rho - 1)^2 \,dx, \cr
I_1^3 &\leq C\sigma |m_c(0)|^2\int_{\T^d} (\rho - 1)^2 \,dx,
\end{aligned}
\end{align*}
due to the conservation of momentum. Thus we have
\[
I_1 \leq \lt(\frac{\sigma^{1/2}\bar \rho(\|u\|_{L^\infty} + |m_c(0)|^2)}{2} \rt)\int_{\T^d} \rho|u-m_c|^2 \,dx + C\sigma\lt(|m_c(0)|^2 + \sigma^{1/2}\rt)\int_{\T^d}(\rho-1)^2 \,dx.
\]
For the estimate of $I_3$, we deduce that
\begin{align*}
\begin{aligned}
&\sigma\int_{\T^d} \rho (u-m_c)\cdot\mathcal{B}[\nabla\cdot(\rho u)]\,dx \cr
& \hspace{.5cm} = \sigma\int_{\T^d} \rho (u - m_c)\cdot\mathcal{B}[\nabla \cdot (\rho (u -
m_c))]\,dx + \sigma\int_{\T^d} \rho (u - m_c)\cdot\mathcal{B}[\nabla \cdot (\rho
m_c)]\,dx \cr
& \hspace{.5cm} = \sigma\int_{\T^d} \rho (u - m_c)\cdot\mathcal{B}[\nabla \cdot
(\rho (u - m_c))]\,dx + \sigma\int_{\T^d} \rho (u - m_c)\cdot\mathcal{B}[\nabla
\cdot ((\rho - 1) m_c)]\,dx \cr
&\hspace{.5cm}\leq C \sigma\bar\rho\int_{\T^d} \rho|u-m_c|^2\,dx
+ \sigma\int_{\T^d}\rho|u-m_c|| \mathcal{B}[\nabla
\cdot ((\rho - 1) m_c)]| \,dx \cr
&\hspace{.5cm} \leq C \lt(\sigma + \sigma^{1/2}\rt)\bar\rho\int_{\T^d} \rho|u-m_c|^2\,dx + C \sigma^{3/2}|m_c(0)|^2\int_{\T^d} (\rho - 1)^2 \,dx. \cr
\end{aligned}
\end{align*}
We next estimate $I_4$. Since
\[
\partial_t(\rho m_c) = m_c \partial_t \rho + \rho m_c^{\prime}
= - m_c \nabla_x \cdot (\rho u),
\]
we obtain
\begin{align*}
\begin{aligned}
\sigma \int_{\T^d} \partial_t(\rho m_c) \cdot \mathcal{B}[\rho - 1] \, dx  &= \sigma \int_{\T^d} \rho ( u - m_c) \cdot m_c \nabla \mathcal{B}[\rho - 1] \,dx + \sigma \int_{\T^d} (\rho - \rho_c)|m_c|^2 \nabla \mathcal{B} [\rho - 1] \,dx\cr
&:=I_4^1 + I_4^2.
\end{aligned}
\end{align*}
Here, the terms $I_4^i,i=1,2$ can be estimated as
\begin{align*}
\begin{aligned}
I_4^1 &\leq \sigma^{1/2}\int_{\T^d} \rho|u-m_c|^2 \,dx + C\sigma^{3/2}|m_c(0)|^2\bar\rho\int_{\T^d} ( \rho - 1)^2 \,dx,\cr
I_4^2 &\leq C\sigma |m_c(0)|^2 \int_{\T^d} ( \rho - 1)^2 \,dx.\cr
\end{aligned}
\end{align*}
This yields
\[
I_4 \leq \sigma^{1/2}\int_{\T^d} \rho|u-m_c|^2 \,dx + C\sigma\lt(\sigma^{1/2} |m_c(0)|^2 \bar\rho + E_0 \rt)\int_{\T^d} (\rho - 1)^2\,dx.
\]
Finally we estimate $I_5$ as follows.
$$\begin{aligned}
I_5 &= \sigma\int_{\T^d \times \T^d} \psi(x-y)(u(x) - m_c) \cdot \mathcal{B}[\rho(x) - 1] \rho(x)\rho(y) \,dx dy\cr
&\quad + \sigma\int_{\T^d \times \T^d} \psi(x-y)(m_c - u(y))\cdot \mathcal{B}[\rho(x) - 1] \rho(x) \rho(y) \,dx dy\cr
&\leq C\sigma^{1/2}\|\psi\|_{L^\infty}\lt( \bar\rho\int_{\T^d} \rho|u-m_c|^2\,dx + \sigma\int_{\T^d} (\rho - 1)^2 \,dx\rt)\cr
&\quad + C\sigma^{1/2}\|\psi\|_{L^\infty}\lt(\int_{\T^d} \rho |u-m_c|^2\,dx + \sigma\int_{\T^d} (\rho - 1)^2 \,dx \rt),
\end{aligned}$$
where we used the following estimate:
$$\begin{aligned}
&\sigma\lt(\int_{\T^d} \rho|u-m_c| \,dx \rt)\lt(\int_{\T^d} \rho|\mathcal{B}[\rho - 1]|\,dx \rt)\cr
&\quad \leq C\sigma^{1/2}\lt(\int_{\T^d} \rho|u-m_c| \,dx \rt)^2 + C\sigma^{3/2}\lt(\int_{\T^d} \rho|\mathcal{B}[\rho - 1]|\,dx \rt)^2\cr
&\quad \leq C\sigma^{1/2}\lt(\int_{\T^d} \rho |u-m_c|^2 \,dx + \sigma\int_{\T^d} (\rho - 1)^2 \,dx \rt).
\end{aligned}$$
We now combine all the above estimates to find
$$\begin{aligned}
\mathcal{D}^\sigma(t) &\geq \frac12\int_{\T^d \times \T^d} \psi(x-y)|u(x)-u(y)|^2 \rho(x)\rho(y) \,dx dy + C_1\int_{\T^d}(\rho - 1)^2 \,dx\cr
&\quad  - C_2 \sigma^{1/2}\int_{\T^d} \rho|u-m_c|^2 \,dx,
\end{aligned}$$
where $C_1$ and $C_2$ are given by
\[
C_1:= \sigma\lt(1 - C|m_c(0)|^2\lt(1+\sigma^{1/2}(1+\bar\rho)\rt) - C\sigma^{1/2}\lt( 1 + \|\psi\|_{L^\infty}\rt)\rt),
\]
and
\[
C_2:=C\bar\rho\lt( 1 + \frac{1}{\bar\rho}+\frac{\|u\|_{L^\infty} + |m_c(0)|^2}{2} + \frac{\|\psi\|_{L^\infty}}{\bar\rho}\rt).
\]
We notice that $C_1$ is positive constant for sufficiently small $\sigma$ and $|m_c(0)|$. We also find that
$$\begin{aligned}
\frac12\int_{\T^d \times \T^d} \psi(x-y)|u(x) - u(y)|^2 \rho(x) \rho(y) \,dx dy & \geq \frac{\psi_m}{2}\int_{\T^d \times \T^d}|u(x) - u(y)|^2 \rho(x) \rho(y) \,dx dy\cr
&= \psi_m\int_{\T^d}\rho|u-m_c|^2\,dx,
\end{aligned}$$
where we used
\[
\int_{\T^d \times \T^d} (u(x) - m_c) \cdot (m_c - u(y)) \rho(x) \rho(y) \,dx dy = 0.
\]
Thus we have
\begin{align*}
\begin{aligned}
\mathcal{D}^\sigma(t) &\geq \lt(\psi_m - C_2 \sigma^{1/2}\rt)\int_{\T^d} \rho|u-m_c|^2 \,dx + C_1\int_{\T^d} (\rho - 1)^2 \,dx
\end{aligned}
\end{align*}
We again select $\sigma$ and $|m_c(0)|$ small enough to get the positive $C_1>0$ and $\psi_m  - C_2\sigma^{1/2} > 0$. This yields that the all coefficients above are positive. The estimate for upper bound of $\mathcal{D}^\sigma$ is clear. Hence we have
\[
\frac{d}{dt}\mathcal{E}^\sigma(t) + \frac{c_5}{c_4}\mathcal{E}^\sigma(t) \leq 0\leq \frac{d}{dt}\mathcal{E}^\sigma(t) + \frac{c_6}{c_3}\mathcal{E}^\sigma(t), \quad t \ge 0 ,
\]
due to 
\[
\frac{c_6}{c_3}\mathcal{E}^\sigma(t) \geq c_6\mathcal{F}(t) \geq \mathcal{D}^\sigma(t) \geq c_5\mathcal{F}(t) \geq \frac{c_5}{c_4}\mathcal{E}^\sigma(t),
\] 
for $\sigma > 0$ small enough, and this deduces
\[
\frac{c_3}{c_4}\mathcal{F}_0e^{-\frac{c_6}{c_3}t} \leq \frac{1}{c_4}\mathcal{E}_0^\sigma e^{-\frac{c_6}{c_3}t} \leq \frac{1}{c_4}\mathcal{E}^\sigma(t) \leq \mathcal{F}(t) \leq \frac{1}{c_3}\mathcal{E}^\sigma(t) \leq \frac{1}{c_3}\mathcal{E}_0^\sigma e^{-\frac{c_5}{c_4}t} \leq \frac{c_4}{c_3}\mathcal{F}_0e^{-\frac{c_5}{c_4}t},
\]
for $t \geq 0$ and $\sigma > 0$ small enough. This concludes our desired result.
\end{proof}

%%%%%%%%%%%%%%%%%%%%%%%%%%%%%%%%%%%%%%%%%%%%%%%%%%%%%%%%%%%%%%%%%%%%%%%%%%%%%%%%%5
%
%
%                        \section{Acknowledgements}
%
%
%%%%%%%%%%%%%%%%%%%%%%%%%%%%%%%%%%%%%%%%%%%%%%%%%%%%%%%%%%%%%%%%%%%%%%%%%%%%%%%%%
\section*{Acknowledgements}
This research was supported by NRF grant(No. 2017R1C1B2012918 and 2017R1A4A1014735) and POSCO Science Fellowship of POSCO TJ Park Foundation.
%%%%%%%%%%%%%%%%%%%%%%%%%%%%%%%%%%%%%%%%%%%%%%%%%%%%%%%%%%%%%%%%%%%%%%%%%%%%%%%%%5
%
%
%                        thebibliography
%
%
%%%%%%%%%%%%%%%%%%%%%%%%%%%%%%%%%%%%%%%%%%%%%%%%%%%%%%%%%%%%%%%%%%%%%%%%%%%%%%%%%

\end{document}